\newcommand{\lbl}{\footnotesize\textcolor{black!50}}
\newtheorem{theorem}{Theorem}[section]
\newtheorem{corollary}[theorem]{Corollary}
\newtheorem{prop}[theorem]{Proposition}
\newtheorem{conj}[theorem]{Conjecture}
\theoremstyle{definition}
\newtheorem{definition}[theorem]{Definition}
\newtheorem{rmk}[theorem]{Remark}
\newcommand{\Mr}{M^{\mathrm{r}}}
\begin{document}
\title{A Random Card Shuffling Process}

\author{Joel Brewster Lewis and Mehr Rai}
\date{\today}
\maketitle

\begin{abstract}
Consider a randomly shuffled deck of $2n$ cards with $n$ red cards and $n$ black cards. We study the average number of moves it takes to go from a randomly shuffled deck to a deck that alternates in color by performing the following move:  If the top card and the bottom card of the deck differ in color place the top card at the bottom of the deck, otherwise, insert the top card randomly in the deck. We use tools from combinatorics, probability, and linear algebra to model this process as a finite Markov chain.
\end{abstract}

\section*{Introduction}

Fix a positive integer $n$. In this paper, we consider a random shuffling game played on a deck of $2n$ cards, with $n$ cards of one color and $n$ cards of another. This game is inspired by the top-to-random shuffle (as in, e.g., \cite{AldousDiaconis, BHR}), in which the top card is repeatedly randomly inserted into the deck.  Despite its simple description, the resulting Markov chain has a rich structure, with connections to the representation theory of the symmetric group \cite{diaconis_fill_pitman_1992}.  The process we study is defined by the following move: if the cards on the top and at the bottom of the deck are of the same color, we insert the top card randomly into the $2n$ positions in the deck; but if they are of different colors, we deterministically move the top card to the bottom of the deck.

Unlike the usual top-to-random shuffle, the resulting Markov chain is not recurrent; indeed, we show (in Corollary~\ref{prop:absorbing}) that it is an absorbing chain, with any initial deck eventually converging to a deck in which the two colors alternate, that is, a deck that has cards of one color in all the even positions or all the odd positions of the deck.  Moreover, we show (in Theorem~\ref{prop:tiers}) that there is a \emph{tier} structure on the decks, such that the random process proceeds successively through the various tiers $k$, $k - 1$, \ldots, before reaching the alternating deck (tier $0$).

Our main goal is to study the average number of moves it takes to reach such an alternating deck of cards.  In tier $1$ (the decks closest to the alternating case), we are able to explicitly solve for these expected values (Corollary~\ref{cor:t1steps}).  However, in higher tiers, finding explicit answers seems intractable (see discussion in Section~\ref{sec:exact values}).  Instead, we focus on bounds.  As our main result, we show (in Theorem~\ref{maxmink} and Corollary~\ref{cor:betterMbound}) that the decks in tier $k$ all require in expectation between $n^2 + 2nH_k$ and $(2 H_k - 1)n^2 -nk + O(n)$ moves  (where $H_k$ is the $k$-th harmonic number and $O$ is big O notation) to reach the absorbing deck.  We conjecture (Conjecture~\ref{conj:limit}) that the lower bound is closer to the truth: specifically, we believe that all decks have expected times to absorption of order $n^2$.

We end this introduction with an overview of the paper.  In Section~\ref{2}, we provide necessary terminology and background related to Markov chains in general and our process in particular. We also include an algorithm from \cite{FM:1} that computes the average number of moves it takes to go from any deck to the perfectly alternating deck. Since our goal is to find the average number of moves it takes to reach the absorbing state for an arbitrary $n$, not just for fixed values of $n$, we are motivated to find a general formula. In Section~\ref{3}, we introduce tiers (Definition~\ref{def:tier}), and use them to show that the chain really is absorbing.  In Section~\ref{sec:tier1}, we provide a complete analysis of the expectations for decks in tier $1$ (Corollary~\ref{cor:t1steps}).  In Section~\ref{sec:tier2+}, we provide upper and lower bounds for the expected number of steps to absorption for decks in arbitrary tiers (Theorem~\ref{maxmink}), and in Section~\ref{sec:improved max}, we improve the upper bound (Theorem~\ref{thm:maxMr}).  We end in Section~\ref{4} with some final remarks and open questions.

\subsection*{Acknowledgments}
This paper is based on the second author's undergraduate honors thesis at George Washington University \cite{thesis}.  We thank Joseph Bonin and Daniel Ullman for comments and suggestions.  Work of the first author was supported in part by a Simons Foundation grant (634530) and the GW University Facilitating Fund.

\section{Background and preliminaries}
\label{2}

In this section, we introduce some notations and conventions that will be used throughout the paper.  For definitions related to Markov chains, we follow \cite{FM:1}.

Ignoring suits and numbers, a standard deck of cards consists of an equal number of cards in each of two colors, which we will call $0$ and $1$.
\begin{definition} 
A \textbf{deck} is an ordered sequence consisting of $n$ copies of $0$ and $n$ copies of $1$. We represent the deck as a string of $0$s and $1$s (omitting commas and parentheses).
\label{def:deck}
\end{definition}
Thus, there are $\binom{2n}{n}$ decks consisting of $n$ cards in each of the two colors.

Next we will consider a random dynamical process that moves from one deck to another.  This process will be a \textbf{Markov chain}: that is, the probability of moving to a given state is dependent only on the present state.  The moves in the Markov chain are defined as follows.

\begin{definition} 
Given a deck $D$ whose first and last digits are different, let $D'$ be the deck that has $D$'s first digit as its last digit and the rest of the digits of the deck are shifted to the left by one position.  Then a \textbf{move} sends $D$ to $D'$ with probability $1$.  In this case we call $D$ a \textbf{deterministic deck} or \textbf{d-deck}.

Given a deck $D$ whose first and last digits are identical, then a move sends $D$ to a deck $D'$ by inserting the first digit of $D$ randomly in any of its $2n$ positions with all choices being equally likely. In this case we call $D$ a \textbf{random deck} or an \textbf{r-deck}. 
\label{def: move}
\end{definition}
These moves are illustrated in Figure~\ref{fig:moveexample} for a deck of length $6$.


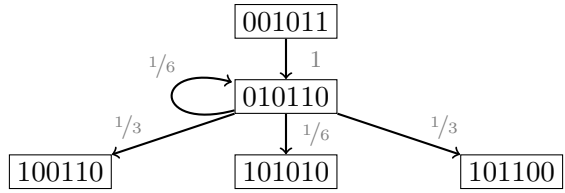
\begin{figure}
\begin{center}
\begin{tikzpicture}  
   \tikzset{Vertex/.style = {rectangle,inner sep=1mm, draw=black}}  

  \node[Vertex] (1) at (0,0)  {$001011$}; 
  \node[Vertex] (2) at (0,-1) {$010110$};
\foreach \from/\to in
{1/2}
\draw[->,thick](\from)--(\to);
    \node[Vertex] (3) at (-3,-2) {$100110$};
    \foreach \from/\to in
{2/3}
\draw[->,thick](\from)--(\to);
      \node[Vertex] (4) at (0,-2) {$101010$};\foreach \from/\to in
{2/4}
\draw[->,thick](\from)--(\to);
        \node[Vertex] (5) at (3,-2) {$101100$};
\draw[->,thick](2)--(5);

\path[->, thick] (2) edge[loop left] (2) ;

\node[black!50]  at (0.4,-0.5) {\footnotesize $1$};

\node[black!50]  at (0.4,-1.5) {\footnotesize $\nicefrac{1}{6}$};

\node[black!50]  at (2.1,-1.4) {\footnotesize $\nicefrac{1}{3}$};

\node[black!50]  at (-2.1,-1.4) {\footnotesize $\nicefrac{1}{3}$};

\node[black!50]  at (-1.65, -0.60) {\footnotesize $\nicefrac{1}{6}$};
\end{tikzpicture}
\end{center}
\caption{An example of the move defined in Definition~\ref{def: move}. In the first step we must place the leading card at the bottom of the deck.  In the second step the leading card can be inserted in any of the $6$ positions. The weights on the directed edges indicate the probability of going from one deck to another.  }
\label{fig:moveexample}
\end{figure}

\begin{rmk}
There is an obvious symmetry in the process just described: if the deck $d_1 \cdots d_{2n}$ moves to $d'_1 \cdots d'_{2n}$ with probability $p$, then the deck $(1 - d_1) \cdots (1 - d_{2n})$ (that we get by swapping the roles of the two colors) moves to $(1 - d'_1) \cdots (1 - d'_{2n})$ with probability $p$.  Therefore, going forward, we ``fold'' our Markov chain in half and identify the two decks $d_1 \cdots d_{2n}$ and $(1 - d_1) \cdots (1 - d_{2n})$; in particular, we only consider decks beginning with $0$, swapping colors if necessary.
\label{01symmetry}
\end{rmk}

For instance, the move sends $011001$ to $110010$ which is equivalent to $001101$ by symmetry. Considering Remark~\ref{01symmetry}, we have halved the number of decks we observe for case $n$ to $\frac{1}{2} \binom{2n}{n}$ decks. In Figure~\ref{fig:moveexamplesymmetry}, we redraw Figure~\ref{fig:moveexample} with the new convention.

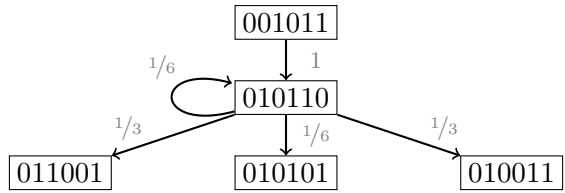
\begin{figure}[H]
\begin{center}
   \begin{tikzpicture}  
   \tikzset{Vertex/.style = {rectangle,inner sep=1mm, draw=black}}  

  \node[Vertex] (1) at (0,0)  {$001011$}; 
  \node[Vertex] (2) at (0,-1) {$010110$};
\foreach \from/\to in
{1/2}
\draw[->,thick](\from)--(\to);
    \node[Vertex] (3) at (-3,-2) {$011001$};
    \foreach \from/\to in
{2/3}
\draw[->,thick](\from)--(\to);
      \node[Vertex] (4) at (0,-2) {$010101$};\foreach \from/\to in
{2/4}
\draw[->,thick](\from)--(\to);
        \node[Vertex] (5) at (3,-2) {$010011$};
\draw[->,thick](2)--(5);

\path[->, thick] (2) edge[loop left] (2) ;

\node[black!50]  at (0.4,-0.5) {\footnotesize $1$};

\node[black!50]  at (0.4,-1.5) {\footnotesize $\nicefrac{1}{6}$};

\node[black!50]  at (2.1,-1.4) {\footnotesize $\nicefrac{1}{3}$};

\node[black!50]  at (-2.1,-1.4) {\footnotesize $\nicefrac{1}{3}$};

\node[black!50]  at (-1.65, -0.60) {\footnotesize $\nicefrac{1}{6}$};
\end{tikzpicture}
\caption{The example in Figure~\ref{fig:moveexample}, relabeled as in Remark~\ref{01symmetry}.}
\label{fig:moveexamplesymmetry}
\end{center}
\end{figure}

Following Remark~\ref{01symmetry}, we may redefine r-decks as the decks that begin and end with a $0$ and d-decks as the decks that begin with a $0$ and end with a $1$.  When $n = 3$, the r-decks are $010110$, $011010$, $001110$ and $011100$,  and the d-decks are  $010101$, $001011$, $001101$, $011001$, $010011$ and $000111$ --- see Figure~\ref{markov chain n=3}.

It will turn out (see Corollary~\ref{prop:absorbing}) that our Markov chain is of a special type, which we define next.

\begin{definition}
An \textbf{absorbing state} in a Markov chain is a state that once entered is impossible to leave.
\label{def:absstate}
\end{definition}
In our case, the absorbing state is the deck of perfectly interleaved $0$s and $1$s. This is because our move forces us to continually place the leading card at the bottom of the deck since a perfectly alternating deck will always remain a d-deck. See the absorbing state, $010101$, for $n=3$ at the top of Figure~\ref{markov chain n=3}.
\begin{definition}
An \textbf{absorbing Markov chain} has the following properties:
\begin{enumerate}[i.]
\item It contains at least one absorbing state.
\item It is possible to go from every other state to an absorbing state (not necessarily in a single step).
\end{enumerate}
When a Markov process reaches an absorbing state, it is said to be \textbf{absorbed}.
\end{definition}

\begin{figure}
\begin{center}
\begin{tikzpicture}
\tikzstyle{edge}=[->]
\node[rectangle, dashed, draw = black!30, ](1) at (0,0){$010101$};
\node[rectangle, fill = blue!20,draw = black!20](9) at (2.5,-1){$011010$};
\node[rectangle, fill = blue!20,draw = black!20](3) at (2.5,-2){$001101$};
\node[rectangle, fill = blue!20,draw = black!20](8) at (2.5,-3){$011001$};
\node[rectangle, fill = blue!20,draw = black!20](5) at (2.5,-4){$010011$};

\node[rectangle, fill = blue!10,draw = black!20](7) at (-2.5,-1){$010110$};
\node[rectangle, fill = blue!10,draw = black!20](2) at (-2.5,-2){$001011$};

\node[rectangle, draw = black!20, fill=red!10](4) at (0,-5){$001110$};
\node[rectangle, draw = black!20,fill=red!10](6) at (0,-6){$000111$};
\node[rectangle, draw = black!20, fill=red!25](10) at (-0,-7){$011100$};

\path[->](9) edge [bend right =20] node[above]{\lbl{$\nicefrac{1}{6}$}} (1)
 (7) edge [bend left=20] node[above]{\lbl{$\nicefrac{1}{6}$}} (1);

\path[->](3) edge node[left]{\lbl{1}} (9);
\path[->](8) edge node[left]{\lbl{1}} (3);
\path[->](5) edge node[left]{\lbl{1}} (8);
\path[->](6) edge node[left]{\lbl{1}} (4);
\path[->](2) edge node[left]{\lbl{1}} (7);

\path[->](1) edge [loop above] node[above]{\lbl{1}} (1);

\path[->](4) edge [loop above] node[above]{\lbl{$\nicefrac{1}{3}$}
}(4)
 (4) edge [out=135,in=315] node[left]{\lbl{$\nicefrac{1}{6}$}} (7)
 (4) edge [out=45,in=225] node[left]{\lbl{$\nicefrac{1}{6}$}} (9)
(4) edge [bend left=60] node[right]{\lbl{$\nicefrac{1}{3}$}} (10)
;

\path[->](9) edge [loop above=45] node[right]{\lbl{$\nicefrac{1}{6}$}} (9)
 (9) edge [bend left] node[right]{\lbl{$\nicefrac{1}{3}$}} (3)
 (9) edge [out=60, in=150, bend right] node[below]{\lbl{$\nicefrac{1}{3}$}} (2);
 
\path[->](7) edge [loop above] node[left]{\lbl{$\nicefrac{1}{6}$}} (7)
 (7) edge [out=-15,in=170] node[above]{\lbl{$\nicefrac{1}{3}$}} (8)
 (7) edge [out=-30,in=180] node[left]{\lbl{$\nicefrac{1}{3}$}} (5);

\path[->](10) edge [loop below] node[below]{\lbl{$\nicefrac{1}{6}$}} (10)
 (10) edge [bend right] node[right]{\lbl{$\nicefrac{1}{6}$}} (5)
(10) edge [bend left] node[left]{\lbl{$\nicefrac{1}{6}$}} (2)
 (10)edge node[left]{\lbl{$\nicefrac{1}{2}$}} (6);
 
\end{tikzpicture}
\end{center}
\caption{Markov chain for $n=3$. Each deck represents a state in a Markov chain and the weights on the directed edges indicate the probability of going from one deck to another. The absorbing state $010101$ is at the top of the figure.}
\label{markov chain n=3}
\end{figure}
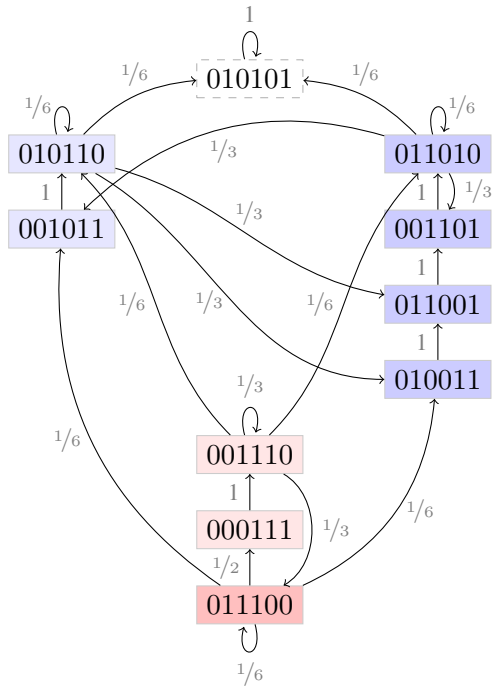

Starting from any state in any absorbing Markov chain, the process is eventually absorbed with probability $1$. Thus, a natural question about the behavior of an absorbing Markov chain is to compute the average number of steps needed to be absorbed, starting from a given state.  There is a standard approach to compute this quantity, which we sketch now (following \cite{FM:1}).

\begin{definition}
The \textbf{transition matrix} of a Markov chain is the square matrix whose $a_{ij}$ entry is the probability of going from state $i$ to state $j$ in one step. 
\end{definition}
By definition, the sum of the row entries of a transition matrix is $1$. See Figure~\ref{transition matrix 2} for the transition matrices associated to our Markov chain in the cases $n = 2$ and~$3$.

Given an absorbing Markov chain, it is always possible to order the states with the absorbing states preceding all others.  In this case, the resulting transition matrix $T_n$ will have the block structure
\[
T_n=
\left[
\begin{array}{c|c}
I & O \\
\hline
R & Q
\end{array}
\right]
\]
where $I$ is an identity matrix whose size is equal to the number of absorbing states and $O$ is a matrix of all $0$s.  For example, the matrices in Figure~\ref{transition matrix 2} are in this form, with a single absorbing state, so that $O$ is a row vector of all $0$s and $R$ is a column vector.

\begin{figure}
\[
\begin{blockarray}{cccc}
\begin{block}{(ccc)c}
  1 & 0 & 0 & 0101\\
\frac{1}{4} & \frac{1}{4} & \frac{1}{2} &0110\\
 0 & 1 &0& 0011 \\
\end{block}
\end{blockarray}
\qquad\quad
\begin{blockarray}{ccccccccccc}
\begin{block}{(cccccccccc)c}
1 &&&&&&&&&  & 010101 \\
\frac{1}{6} & \frac{1}{6} & \frac{1}{3} & & & & \frac{1}{3} &&&& 011010 \\
& 1 &&&&&&&&& 001101 \\
&& 1 &&&&&&&& 011001 \\
&&& 1 &&&&&&& 010011 \\
\frac{1}{6} &&& \frac{1}{3} & \frac{1}{3} & \frac{1}{6} &&&&& 010110 \\
&&&&& 1 &&&&& 001011 \\
& \frac{1}{6} &&&& \frac{1}{6} && \frac{1}{3} && \frac{1}{3}& 001110 \\
&&&&&&& 1 &&& 000111 \\
&&&&\frac{1}{6}&&\frac{1}{6}&&\frac{1}{2}&\frac{1}{6}& 011100 \\
\end{block}
\end{blockarray}
\]
\caption{Transition matrices for $n=2$ (left) and for $n = 3$ (right, with entries equal to $0$ omitted), with rows labeled by the corresponding deck. For instance, the $(2,2)$ entry $\frac{1}{4}$ of the matrix at left is the probability of going from $0110$ to itself in one step (which can only happen if the top card is inserted into the top position), and the $(2,3)$ entry $\frac{1}{2}$ is the probability of going from $0110$ to $0011$ in one step (which happens if the top card is inserted into the third or fourth position in the deck).}
\label{transition matrix 2}
\end{figure}

\begin{definition}
The \textbf{fundamental matrix} $N$ of an absorbing chain is the matrix given by $(I-Q)^{-1}$. 
\end{definition}
\begin{theorem}[{\cite[\S4.8]{FM:1}}]
\label{thm:fundamental matrix}
Adding the row entries of $N$, or equivalently multiplying on the right by the column vector of all $1$s, gives the mean number of steps until absorption from each starting state.
\end{theorem}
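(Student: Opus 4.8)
The plan is to prove the claim by first-step analysis, reducing it to a linear system whose unique solution is exactly $N\mathbf{1}$.

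First I would verify that $N=(I-Q)^{-1}$ is well-defined and admits the Neumann series expansion $N=\sum_{k\ge 0}Q^{k}$. Because the chain is absorbing, from every transient state there is a path of positive probability reaching an absorbing state, and since there are only finitely many states, the probability of avoiding absorption over some fixed block of steps is bounded by a constant $c<1$. This forces the row sums of $Q^{k}$ to decay geometrically, so $Q^{k}\to O$ entrywise. Consequently the partial sums converge, and telescoping $(I-Q)\sum_{k=0}^{m}Q^{k}=I-Q^{m+1}$ and letting $m\to\infty$ shows that $\sum_{k\ge 0}Q^{k}=(I-Q)^{-1}$. In particular $I-Q$ is invertible, and the expected time to absorption from each state is finite.

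Next, let $t_{i}$ denote the expected number of steps to absorption starting from transient state $i$, collected into a column vector $t$. Conditioning on the outcome of the first move gives the recurrence $t_{i}=1+\sum_{j}Q_{ij}\,t_{j}$: we always take one step, and with probability $Q_{ij}$ we land in transient state $j$, from which the expected additional time is $t_{j}$, while transitions into absorbing states contribute no further steps. In matrix form this reads $t=\mathbf{1}+Qt$, i.e.\ $(I-Q)t=\mathbf{1}$. Multiplying on the left by $N=(I-Q)^{-1}$ yields $t=N\mathbf{1}$, which is precisely the assertion that summing the row entries of $N$ gives the mean number of steps until absorption.

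The main obstacle is the justification that the $t_{i}$ are finite and that the first-step recurrence is legitimate; this is exactly what the geometric decay of $Q^{k}$ established in the first step secures. As an alternative to first-step analysis, one can argue directly from the series: the $(i,j)$ entry of $Q^{k}$ is the probability of occupying transient state $j$ at time $k$ when starting from $i$, so $N_{ij}=\sum_{k\ge 0}(Q^{k})_{ij}$ is the expected number of visits to $j$ before absorption, and summing over all transient $j$ counts the total number of transient time-steps, which equals the number of moves until absorption. Either route delivers the stated formula.
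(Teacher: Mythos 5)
Your proof is correct. Note, however, that the paper does not prove this statement at all: it is quoted as a known result from the reference \cite{FM:1} (the theory of absorbing Markov chains), so there is no internal proof to compare against. Your argument --- establishing $N=(I-Q)^{-1}=\sum_{k\ge 0}Q^k$ via geometric decay of $Q^k$, then either solving the first-step recurrence $(I-Q)t=\mathbf{1}$ or interpreting $N_{ij}$ as the expected number of visits to transient state $j$ and summing over $j$ --- is precisely the standard textbook derivation found in such references, so in effect you have supplied the proof the paper delegates to its citation.
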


Applying Theorem~\ref{thm:fundamental matrix} to the matrix for $n = 2$ (see Figure~\ref{transition matrix 2}), we find that the fundamental matrix for $n=2$ is
\[
N_2=
\left(
I_2 - \begin{pmatrix}\frac{1}{4} & \frac{1}{2} \\ 1 & 0\end{pmatrix}
\right)^{-1}
=
\begin{blockarray}{ccc}
0110 & 0011 & \\[6pt]
\begin{block}{(cc)c}
   4 &2 & 0110\\
   4 & 3& 0011\\
\end{block}
\end{blockarray}
 \]
and so it takes on average $6$ steps to be absorbed beginning at $0110$ and $7$ steps to be absorbed beginning at $0011$.

The above algorithm makes it possible for us to compute the average number of moves it takes for any deck to reach the absorbing state for a given $n$. However, as $n$ increases, the number of states in the Markov chain increases rapidly -- for a given $n$ we have $\frac{1}{2} \binom{2n}{n}$ decks, and so the matrix to be inverted has dimensions $\left(\frac{1}{2} \binom{2n}{n}  - 1\right)\times \left(\frac{1}{2} \binom{2n}{n}  - 1\right)$ -- which makes exact computation challenging even for modest values of $n$.  Moreover, the exact values (discussed further in Section~\ref{sec:exact values}) do not seem to suggest any nice formulas or structure.  Therefore, in the next section, we turn our attention to broader structures in the Markov chain that will allow us to study its behavior systematically.

\section{Tiers}
\label{3}

In this section, we study a global structure of \emph{tiers} on the Markov chain.  The main result of the section (Theorem~\ref{prop:tiers}) establishes that the process always passes through the tiers in order, proceeding from tier $k$ to tier $k-1$; and we use this to conclude that the process is an absorbing Markov chain, as promised earlier.  We also solve some enumerative questions related to the tiers.

\begin{definition}
A \textbf{block} in a deck is a maximum substring made of consecutive copies of cards of a single color.
\label{def:block}
\end{definition}
For example, for $n = 5$, the deck $0110001110$ is a string of length $2n =10$ with three blocks of $0$s and two blocks of $1$s.

\begin{definition}
We say that a deck belongs to \textbf{tier $k$} if it has exactly $n - k$ blocks of $1$s.
\label{def:tier}
\end{definition}

One could rephrase Definition~\ref{def:tier} in various equivalent ways: since we consider only decks beginning with $0$, each block of $1$s is preceded by a $0$, and hence a deck is in tier $k$ if and only if it has precisely $n - k$ consecutive substrings of the form $01$; further, since each $1$ is preceded by either a $0$ or a $1$, a deck belongs to tier $k$ if and only if it has precisely $k$ consecutive substrings of the form $11$.  Since each deck $D$ contains exactly $n$ copies of $1$, it may have as few as one block of $1$s (if all the $1$s are consecutive in $D$) or as many as $n$ (if the deck alternates $0101\ldots$).  Thus, the tiers are numbered from $0$ to $n-1$.
Our first result shows that the tier structure determines the global behavior of the Markov chain.

\begin{theorem}
\label{prop:tiers}
A single move from a deck in tier $k$ either takes the deck to another deck in tier $k$ or to a deck in tier $k-1$.
\end{theorem}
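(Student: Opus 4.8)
The plan is to replace the tier statistic by a single integer that is manifestly invariant under the color-swapping convention of Remark~\ref{01symmetry}, so that the bookkeeping of ``which representative begins with $0$'' after a move never has to be done by hand. For a deck $D$, let $c(D)$ denote the number of maximal runs of $1$s when $D$ is read around a circle (equivalently, the number of cyclic runs of $0$s; these coincide on any cyclic binary word containing both colors). Read around a circle, $c$ depends only on the locations of the color changes, and swapping the two colors preserves that set of locations; hence $c$ is unchanged by the fold and is well defined on folded decks. I would first check that $c$ recovers the tier: for a deck beginning with $0$ no run of $1$s straddles the junction between the last and first card (the first card is a $0$), so $c(D)$ equals the number of linear blocks of $1$s, and therefore $\mathrm{tier}(D) = n - c(D)$ by Definition~\ref{def:tier}. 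With this reformulation the theorem becomes the assertion that a single move changes $c$ by either $0$ or $+1$.

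For a d-deck the statement is transparent in this language: sending $0 d_2 \cdots d_{2n}$ to $d_2 \cdots d_{2n} 0$ is exactly a cyclic rotation by one position, so it does not change the cyclic word at all and $c(D') = c(D)$. Thus every d-deck move keeps the deck in tier $k$, which also explains why these deterministic moves are ``within-tier'' shuffles.

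The r-deck case is where the work lies. Here $D = 0 d_2 \cdots d_{2n}$ ends in $0$, the move deletes the leading $0$ to leave the length-$(2n-1)$ word $w = d_2 \cdots d_{2n}$, and then inserts a single $0$ into one of the $2n$ slots of $w$ to form $D'$. I would argue in three short steps. First, deleting a leading $0$ cannot split or merge any run of $1$s, so the number of linear blocks of $1$s of $w$ equals that of $D$, namely $c(D) = n-k$. Second, inserting one $0$ into a binary word can split at most one run of $1$s and can never join two runs, so the number of linear blocks of $1$s of $D'$ is either that of $w$ or one more. Third, since $w$ ends in $0$ and the inserted $0$ leaves a trailing $0$, the word $D'$ always ends in $0$; consequently $c(D')$ equals the number of linear blocks of $1$s of $D'$ even when $D'$ happens to begin with $1$ (the only configuration for which $c$ is \emph{not} the linear $1$-block count is a word beginning and ending in $1$, which cannot occur here). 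Combining the three steps gives $c(D') \in \{\,c(D),\, c(D)+1\,\} = \{\,n-k,\, n-k+1\,\}$, that is, $\mathrm{tier}(D') \in \{k,\, k-1\}$.

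The main obstacle is precisely the interaction between a move and the folding convention: a move can produce a deck beginning with $1$, whose tier is read off from its color-swapped representative, so a naive count of blocks of $1$s in $D'$ would give the wrong answer in that case. The cyclic statistic $c$ is introduced exactly to absorb this, and the one point that must be checked with care is the endpoint bookkeeping in the r-deck case (that $D'$ always ends in $0$), which is what guarantees $c(D')$ still equals the linear $1$-block count and rules out an apparent, but spurious, decrease of $c$.
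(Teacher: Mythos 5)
Your proof is correct, and it organizes the argument differently from the paper. The paper's proof works directly with linear block counts and splits into cases according to whether the color swap of Remark~\ref{01symmetry} is invoked: in the no-swap case it checks the r-deck insertion (next to a $0$ versus between two $1$s) and the d-deck rotation by hand, and in the swap case it observes that the pre-swap deck begins with $1$ and ends with $0$, so its $0$-blocks and $1$-blocks alternate and are equinumerous, which transports the count through the swap. You instead introduce the cyclic run-count $c$, check once and for all that it is invariant under both rotation and color swap and that $\mathrm{tier}(D)=n-c(D)$ on folded decks, and then reduce the theorem to two purely linear facts: deleting a leading $0$ preserves the $1$-block count, and inserting a single $0$ splits at most one run of $1$s and never merges runs, together with the endpoint check that the post-insertion deck ends in $0$ (so the cyclic and linear counts agree). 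The core combinatorial content is the same in both proofs; what your invariant buys is that the folding bookkeeping disappears entirely (no case analysis on whether a swap occurs) and the d-deck case becomes the trivial statement that rotation fixes a cyclic word, at the modest cost of setting up the cyclic viewpoint and verifying the wrap-around caveat. The paper's route is more elementary but must handle the swap by a separate alternation argument.
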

\begin{proof}
Consider a deck $D$ in tier $k$, and let $D'$ be a deck that can be reached from $D$ in one move.  First suppose that reaching the deck $D'$ does not involve switching the colors $0$ and $1$ (as in Remark~\ref{01symmetry}; this happens when the second card in deck $D$ is color $0$ or when $D$ is an r-deck and the top card is inserted in its same position).  If $D$ is an r-deck, we consider two possibilities for where the leading $0$ could be inserted: either it is inserted next to another $0$, leaving the number of blocks of $1$s as $n-k$, thereby remaining in tier $k$; or, the leading $0$ is inserted between two adjacent $1$s, thus increasing the number of blocks of $1$s from $n-k$ to $n-k+1$, sending the deck to tier $k-1$. For decks that end with a $1$, the move sends the leading $0$ to the bottom of the deck, thereby preserving the number of blocks of $1$s.

If instead the second card in $D$ is color $1$ and the top card is not inserted in the top position, then we must apply the color swap as in Remark~\ref{01symmetry} to produce $D'$.  In this case, after the top card is inserted but before swapping colors, the top card of the deck will be color $1$ and the bottom card in the deck will be color $0$ (either because $D$ was an r-deck or because the top card of $D$ was moved to the bottom).  Since block colors alternate between $1$ and $0$, it follows that there must be the same number of $0$-blocks and $1$-blocks in $D'$, and, as in the preceding paragraph, this number must be $n - k$ or $n - k + 1$.  Thus in this case we also see that $D'$ belongs to tier $k$ or $k - 1$.
\end{proof}
Theorem~\ref{prop:tiers} is illustrated for the case $n = 3$ in Figure~\ref{fig:structure} (a redrawing of Figure~\ref{markov chain n=3}). For example, in Figure~\ref{fig:structure}, no arrows point from a deck in tier $2$ to a deck in tier $0$, nor from a deck in tier $1$ to a deck in tier $2$: we can only descend down tiers, without skipping any.


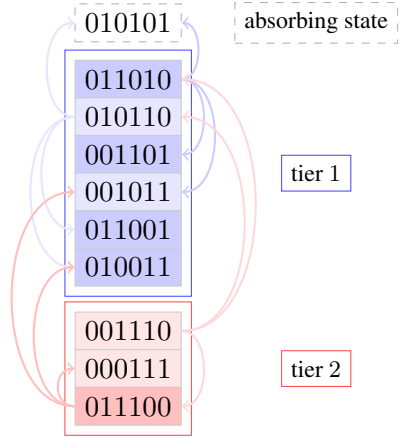
\begin{figure}
\begin{center}
\begin{tikzpicture}
\node[rectangle, dashed, draw = black!30, ](1) at (0,-.25){$010101$};
\node[rectangle, draw = black!20,fill = blue!20](9) at (0,-1){$011010$};
\node[rectangle, draw = black!20,fill = blue!10](7) at (0,-1.5){$010110$};
\node[rectangle, draw = black!20,fill = blue!20](3) at (0,-2){$001101$};
\node[rectangle, draw = black!20,fill = blue!10](2) at (0,-2.5){$001011$};
\node[rectangle,  draw = black!20,fill = blue!20](8) at (0,-3){$011001$};
\node[rectangle, draw = black!20,fill = blue!20](5) at (0,-3.5){$010011$};
\node[rectangle, draw = black!20,fill = red!10](4) at (0,-4.35){$001110$};
\node[rectangle, draw = black!20,fill = red!10](6) at (0,-4.85){$000111$};
\node[rectangle, draw = black!20,fill =red!25](10) at (0,-5.35){$011100$};
\node[draw,red!70,  fit=(4) (6) (10), scale=1] {};
\node[draw, blue!70,  fit=(7) (2) (9) (3) (8) (5), scale=1] {};
\node[rectangle, draw=blue!70](t1) at (2.5,-2.25){\footnotesize{tier $1$}};
\node[rectangle, draw=red!70](t2) at (2.5,-4.85){\footnotesize{tier $2$}};
\node[rectangle, dashed, draw=black!30](abs) at (2.5,-0.25){\footnotesize{absorbing state}};
\path[->](9) edge [thick,blue!20,out=0, in=0] (2)
edge [thick,blue!20,out=0, in=0] (3)
edge [thick,blue!20,out=0, in=0] (1);
\path[->](7)edge [thick,blue!10,out=180, in=180] (8)
edge [thick,blue!10,out=180, in=180] (5)
edge [thick,blue!10,out=180, in=180] (1);

\path[->](4) edge [thick,red!15,out=0, in=0] (7)
edge [thick,red!15,out=0, in=0] (9)
edge [thick,red!15,out=0, in=0] (10);
\path[->](10)edge [thick,red!25,out=180, in=180, looseness=1.5] (6)
edge [thick,red!25,out=180, in=180] (5)
edge [thick,red!25,out=180, in=180] (2);
\end{tikzpicture}
\end{center}
\caption{Tier structure for $n=3$ based on distance from the absorbing state (tier $0$). The arrows indicate where the r-decks go to besides themselves.}
\label{fig:structure}
\end{figure}

\begin{corollary}
\label{prop:absorbing}
The process created by the move on a randomly shuffled deck of length $2n$ is an absorbing Markov chain, and the unique absorbing state is the perfectly alternating deck of $0$s and $1$s: $0101\cdots 01$.  From any starting deck, the process reaches this absorbing state with probability $1$.
\end{corollary}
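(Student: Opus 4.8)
The plan is to derive Corollary~\ref{prop:absorbing} almost entirely from Theorem~\ref{prop:tiers}, treating the tier number as a monovariant that can only decrease. First I would verify property (i) of the definition of an absorbing Markov chain, namely that an absorbing state exists. The perfectly alternating deck $0101\cdots 01$ is a d-deck (its first digit is $0$, its last is $1$), so a move sends its leading $0$ to the bottom, producing $1010\cdots 10$, which after the color-swap identification of Remark~\ref{01symmetry} is again $0101\cdots 01$. Hence this deck transitions to itself with probability $1$ and is absorbing; moreover it is the unique deck in tier~$0$, since tier~$0$ means $n$ blocks of $1$s, forcing every $1$ to be isolated.

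Next I would establish property (ii), that every deck can reach the absorbing state. The key tool is Theorem~\ref{prop:tiers}: a single move from a tier-$k$ deck lands in tier $k$ or tier $k-1$, never higher. So the tier number is nonincreasing along every trajectory. It remains to argue that from any deck in tier $k \ge 1$ there is a positive-probability path that eventually strictly decreases the tier. I would show that from any r-deck in tier $k\ge 1$, the top $0$ can be inserted between two adjacent $1$s (such a pair exists precisely because $k\ge 1$ means some block of $1$s has length at least~$2$), splitting that block and moving the deck to tier $k-1$; this insertion has positive probability. For d-decks, each deterministic move preserves the tier but cyclically advances the deck, and I would check that iterating deterministic moves from any tier-$k$ d-deck must eventually reach an r-deck in the same tier (equivalently, that no tier-$k$ cycle with $k\ge1$ consists solely of d-decks), at which point the r-deck argument applies.

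The main obstacle is precisely this last point: ruling out the possibility that the chain gets trapped in a closed loop of d-decks within some tier $k\ge 1$, from which it could never make a random insertion and hence never descend. A d-deck begins with $0$ and ends with $1$, and the move cyclically rotates it; I would track the cyclic rotations and show that applying the move repeatedly to a deck with $n\ge 1$ copies of each color must eventually present two equal cards at the top and bottom (producing an r-deck), unless the deck is already perfectly alternating, i.e.\ in tier~$0$. Concretely, a deck that returns to an r-deck under rotation exists whenever the deck is not alternating, because a non-alternating deck has some block of length $\ge 2$, and some cyclic rotation places a repeated color at both ends. Once this is in hand, property (ii) follows.

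Finally, I would invoke the standard fact (already implicit in the setup preceding Theorem~\ref{thm:fundamental matrix}) that any finite absorbing Markov chain reaches an absorbing state with probability~$1$ from every starting state, which together with the uniqueness of the absorbing state established above yields the full statement. The strict decrease of the tier along a positive-probability path, combined with the fact that tiers are bounded below by~$0$, guarantees that the alternating deck is reached almost surely.
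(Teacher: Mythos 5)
Your proposal is correct and takes essentially the same approach as the paper: both arguments use Theorem~\ref{prop:tiers} to make the tier a nonincreasing monovariant, exhibit a positive-probability descent from every tier $k \geq 1$ (insertion of the top card between two adjacent $1$s), rule out absorbing states in tiers $k \geq 1$ by this descent, and finish with the standard fact that a finite absorbing Markov chain is absorbed with probability $1$. The only difference is that you explicitly verify a step the paper leaves implicit---that iterated deterministic moves from a non-alternating d-deck must eventually produce an r-deck, via the cyclic-rotation argument---which is added rigor within the same proof strategy rather than a genuinely different route.
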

\begin{proof}
We know that the deck of alternating $0$s and $1$s is an absorbing state since the move always sends this deck back to itself. It follows from the proof of Theorem~\ref{prop:tiers} that there is a path from every deck in tier $k$ to tier $k - 1$, and therefore eventually to the absorbing state in tier $0$. This deck is unique because if another absorbing state existed, then it would exist in tier $i$ where $i>0$ since there is exactly one deck in tier $0$. However, decks in tiers greater than tier $0$ will eventually descend to tier $0$ by Theorem~\ref{prop:tiers}, and hence they are not absorbing states. Thus, our absorbing state is unique and by \cite[\S4.8]{FM:1}, we know that the probability of any deck in an absorbing Markov chain to be absorbed is $1$.
\end{proof}

We end this section by considering some enumerative properties of the tiers.

\begin{definition}
A \textbf{composition} of an integer $n$ is a sequence of positive integers whose sum is $n$; the individual entries of a composition are its \textbf{parts}.
\label{compositions}
\end{definition}

\begin{prop}
The size of tier $k$ is $\binom{n-1}{k}\binom{n}{k}$.
\label{prop:tier size}
\end{prop}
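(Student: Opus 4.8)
The plan is to count directly, using the block decomposition of a deck together with the composition terminology just introduced in Definition~\ref{compositions}. Write $m = n - k$ for the number of $1$-blocks of a tier-$k$ deck. Since every deck under consideration begins with $0$ (Remark~\ref{01symmetry}), its blocks alternate in color, starting with a $0$-block; consequently, a deck is determined by recording the ordered sequence of its $0$-block lengths and the ordered sequence of its $1$-block lengths. Each of these is a composition of $n$, because there are $n$ cards of each color. The number of $1$-blocks is fixed at $m$, so the $1$-block lengths form a composition of $n$ into exactly $m$ parts, of which there are $\binom{n-1}{m-1}$ (by the standard stars-and-bars count, which I would state as a short lemma or cite as well known).

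The one genuine choice is whether the deck ends with a $0$ or with a $1$, and this is where I would split into two cases. If the deck ends with a $1$, then the $0$-blocks and $1$-blocks interleave perfectly and there are exactly $m$ of each; the two compositions then contribute $\binom{n-1}{m-1}$ choices apiece, giving $\binom{n-1}{m-1}^2$ decks. If instead the deck ends with a $0$, there is one more $0$-block than $1$-block, so the $0$-block lengths form a composition of $n$ into $m+1$ parts; this case contributes $\binom{n-1}{m}\binom{n-1}{m-1}$ decks. These two cases are disjoint and exhaustive.

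Adding the two contributions and factoring out the common term gives
\[
\binom{n-1}{m-1}^2 + \binom{n-1}{m}\binom{n-1}{m-1}
= \binom{n-1}{m-1}\left(\binom{n-1}{m-1} + \binom{n-1}{m}\right)
= \binom{n-1}{m-1}\binom{n}{m},
\]
where the last equality is Pascal's rule. Finally I would substitute $m = n - k$ and simplify using the symmetry $\binom{N}{j} = \binom{N}{N-j}$: since $\binom{n-1}{m-1} = \binom{n-1}{n-k-1} = \binom{n-1}{k}$ and $\binom{n}{m} = \binom{n}{n-k} = \binom{n}{k}$, the size of tier $k$ is $\binom{n-1}{k}\binom{n}{k}$, as claimed.

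I do not expect a serious obstacle here; the argument is a routine block-decomposition count. The only points demanding care are getting the case analysis right—in particular, correctly matching the number of $0$-blocks to the number of $1$-blocks in each case—and being careful with the index shift when translating the parameter $m$ back into the tier index $k$ at the end.
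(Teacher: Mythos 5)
Your proof is correct, and it takes a genuinely different route from the paper's, though both rest on the same block-decomposition idea. The common starting point is encoding the $1$-block lengths as a composition of $n$ into $n-k$ positive parts, worth $\binom{n-1}{k}$ choices. The divergence is in how the $0$s are handled: the paper places all $n-1$ remaining $0$s (after the forced leading $0$) in one stroke, via a bijection between the relevant weak compositions and compositions of $n+1$ into $n-k+1$ positive parts, which yields the factor $\binom{n}{k}$ directly and gives the answer as a single product; you instead split on the deck's last card, count the $0$-block compositions in each case, and merge the two counts with Pascal's rule. Your route costs a case analysis and an identity, but it buys something the paper has to prove separately: your two cases are exactly the d-decks (ending in $1$) and the r-decks (ending in $0$), and your intermediate counts $\binom{n-1}{k}^2$ and $\binom{n-1}{k}\binom{n-1}{k-1}$ are precisely the content of Proposition~\ref{numberofrdecks}, which the paper later derives by counting r-decks and obtaining the d-deck count by subtraction from the tier size. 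Your index bookkeeping (the substitution $m=n-k$ followed by the symmetry $\binom{N}{j}=\binom{N}{N-j}$) checks out, so there is no gap.
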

\begin{proof}
We know that a deck in tier $k$ is a string that begins with a $0$, followed by a string of $n$ $1$s and $n-1$ $0$s such that there are $n-k$ blocks of $1$s separated by the remaining $n-1$ $0$s. We can count the number of ways $n$ $1$s can be split into $n-k$ blocks of $1$s. 
This can be done by thinking of the sizes of the blocks of $1$s as the parts of an integer composition of $n$ (Definition~\ref{compositions}).  It is a standard result in elementary enumeration that the number of compositions of $n$ into $n-k$ parts is $\binom{n-1}{k}$ (see, for example, \cite[\S1.2]{EC1}).  After counting the number of compositions of $n$ $1$s into non-empty parts we want to place the remaining $n-1$ $0$s between the blocks of $1$s allowing them to occupy the positions to the left or right of any block of $1$s. This can be done in $\binom{n}{k}$ ways: there is a one-to-one correspondence between such weak compositions and compositions of $n-1+2$ into $n-k+1$ positive parts.
\end{proof}

\begin{rmk}
By summing over all values of $k$, we get a combinatorial proof of the binomial coefficient identity
\[
\frac{1}{2}\binom{2n}{n}=\sum_{k=0}^{n-1}\binom{n-1}{k}\binom{n}{k}.
\]
The left-hand side counts the number of ways to arrange $n$ $0$s and $1$s in a string counting only strings beginning with a $0$. The right-hand side is simply adding the sizes of all the tiers for a fixed $n$.
\end{rmk}

\begin{rmk}
For large $n$, it follows from Proposition~\ref{prop:tier size} that almost all decks lie in tiers that are ``close to'' the middle.  More precisely: for $k$ the nearest integer to $\frac{n}{2} \pm \sqrt{n\ln(n)}$, we have (following \cite[\S5.4]{Spencer}) that $\binom{n}{k}$ and $\binom{n}{k - 1}$ are both asymptotic to $\binom{n}{\lfloor n/2\rfloor} n^{-2}$ (i.e., the limit of the ratios goes to $1$).  Thus, for large $n$, the total number of decks in any tier whose index differ from $\frac{n}{2}$ by more than $\sqrt{n \ln(n)}$ is at most $n^{-4}$ times the size of tier $\lfloor n/2 \rfloor$, and so the total number of decks in all such tiers is at most $n^{-3}$ times the size of the largest tier.  Consequently all but an infinitesimal fraction of decks lie in the middle $2\sqrt{n\ln(n)}$ tiers.
\end{rmk}

We may also consider how many decks in each tier are r-decks and how many are d-decks.

\begin{prop}
The number of r-decks in tier $k$ is $\binom{n-1}{k}\binom{n-1}{k-1}$ and the number of d-decks in tier $k$ is $\binom{n-1}{k}^2$.
\label{numberofrdecks}
\end{prop}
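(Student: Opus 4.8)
The plan is to imitate the counting in the proof of Proposition~\ref{prop:tier size}, but now keeping track of the final digit so that r-decks and d-decks are separated. Recall that a deck in tier $k$ begins with $0$ and has exactly $n-k$ blocks of $1$s. Since blocks alternate in color and the deck starts with a $0$-block, the full block pattern is determined by whether the deck ends in a $0$-block or a $1$-block---that is, precisely by whether it is an r-deck or a d-deck. In each case the remaining freedom is just the choice of block sizes, which I would count using compositions.

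First I would handle the d-decks (those ending in $1$). Here the block sequence alternates $0,1,0,1,\ldots,0,1$, so there are exactly $n-k$ blocks of $0$s and $n-k$ blocks of $1$s. The sizes of the $1$-blocks form a composition of $n$ into $n-k$ positive parts, and independently the sizes of the $0$-blocks form a composition of $n$ into $n-k$ positive parts. By the standard composition count cited in Proposition~\ref{prop:tier size}, each of these can be chosen in $\binom{n-1}{n-k-1}=\binom{n-1}{k}$ ways, and since the two choices are independent this gives $\binom{n-1}{k}^2$ d-decks in tier $k$.

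Next I would handle the r-decks (those ending in $0$). Now the block sequence reads $0,1,0,1,\ldots,1,0$, so there are still $n-k$ blocks of $1$s but now $n-k+1$ blocks of $0$s. The $1$-block sizes again form a composition of $n$ into $n-k$ positive parts, contributing $\binom{n-1}{k}$ choices, while the $0$-block sizes form a composition of $n$ into $n-k+1$ positive parts, which can be done in $\binom{n-1}{n-k}=\binom{n-1}{k-1}$ ways. Multiplying gives $\binom{n-1}{k}\binom{n-1}{k-1}$ r-decks.

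There is no substantial obstacle here beyond correctly counting the number of $0$-blocks in each case; the one point that needs care is that ending in a $0$ forces exactly one additional $0$-block compared with the d-deck case, which is what shifts the binomial index from $k$ to $k-1$. As a consistency check, Pascal's rule gives $\binom{n-1}{k}\binom{n-1}{k-1}+\binom{n-1}{k}^2=\binom{n-1}{k}\binom{n}{k}$, matching the total size of tier $k$ from Proposition~\ref{prop:tier size}.
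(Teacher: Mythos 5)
Your proof is correct, and your count of the r-decks is essentially the paper's: both arguments choose the $1$-block sizes as a composition of $n$ into $n-k$ positive parts ($\binom{n-1}{k}$ ways) and then determine how the $n$ $0$s fill the $n-k+1$ available gaps ($\binom{n-1}{k-1}$ ways). The difference is in the d-deck count: the paper obtains $\binom{n-1}{k}^2$ by subtraction, computing (tier size from Proposition~\ref{prop:tier size}) minus (number of r-decks), i.e.\ $\binom{n-1}{k}\binom{n}{k}-\binom{n-1}{k}\binom{n-1}{k-1}=\binom{n-1}{k}^2$, whereas you count the d-decks directly, observing that a d-deck has exactly $n-k$ $0$-blocks and $n-k$ $1$-blocks whose sizes form two independent compositions of $n$ into $n-k$ parts. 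Your version is more symmetric and self-contained: it does not rely on Proposition~\ref{prop:tier size} at all, and your closing Pascal's-rule identity $\binom{n-1}{k}\binom{n-1}{k-1}+\binom{n-1}{k}^2=\binom{n-1}{k}\binom{n}{k}$ in fact reproves that proposition rather than consuming it. The paper's route is slightly shorter given that the tier size is already in hand, but yours makes the $k\mapsto k-1$ index shift transparent (one extra $0$-block forced by ending in $0$) and cleanly handles the boundary case $k=0$, where $\binom{n-1}{-1}=0$ correctly reports that the unique tier-$0$ deck is a d-deck.
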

\begin{proof}
An r-deck has $0$ as the first and last digit of the string. So to get the total number of r-decks we can count the number of ways we can split $n$ $1$s into $n-k$ blocks of $1$s in $\binom{n-1}{k}$ ways, as we did in Proposition~\ref{prop:tier size}, and then we arrange the remaining $n-2$ $0$s between or adjacent to the blocks of $1$s in $\binom{n-1}{k-1}$ ways. Thus, the total number of r-decks in tier $k$ is $\binom{n-1}{k}\binom{n-1}{k-1}$. By Proposition~\ref{prop:tier size} we get the number of d-decks to be $\binom{n-1}{k}\binom{n}{k}-\binom{n-1}{k}\binom{n-1}{k-1}=\binom{n-1}{k}^2$.
\end{proof}

It follows immediately that the ratio of r-decks to d-decks is $\frac{\binom{n-1}{k}\binom{n-1}{k-1}}{\binom{n-1}{k}^2} = \frac{k}{n - k}$, so there are relatively few r-decks in small tiers and many in large tiers.

\section{A complete analysis of tier 1}
\label{sec:tier1}

By Proposition~\ref{prop:tier size}, there are $n(n - 1)$ decks in tier $1$.  In this section, we give exact formulas for the absorption time of all these decks.

\begin{definition}
A \textbf{chain} is a maximal sequence $D_1, D_2, \ldots, D_t$ of decks such that $D_i$ moves deterministically to $D_{i + 1}$ with probability $1$ for $i = 1, \ldots, t - 1$.  The \textbf{length} of a chain is the number of decks it contains.

As a special case, we do not consider the absorbing state (the alternating $01$ deck) to be part of a chain.
\label{def:chain}
\end{definition}

For example, we see in Figure~\ref{markov chain n=3} that when $n = 3$, there are three chains of length longer than $1$: $001011 \to 010110$ and $010011 \to 011001 \to 001101 \to 011010$ in tier $1$ and $000111 \to 001110$ in tier $2$.

By definition, every d-deck belongs to a chain of length at least $2$.  In any chain of length $t$, the decks $D_1$, \ldots, $D_{t - 1}$ are (by definition) d-decks, while the deck $D_t$ must be an r-deck (or else the sequence would not be maximal).  We say that the r-deck is the \textbf{end} of its chain, or that the chain \textbf{ends} in the r-deck $D_t$. 

It will be convenient in what follows to have a notation for the expected absorption time of a deck.

\begin{definition}
Let $\lambda_D$ denote the average number of moves it takes to go from deck $D$ to the absorbing state.
\end{definition}

\begin{rmk} 
\label{lambda+x}
In a chain $D_1 \to D_2 \to \cdots \to D_{t - 1} \to R$ of length $t$, the average number of moves to reach the absorbing state from a d-deck $D_i$ is exactly $\lambda_R$ plus the number of deterministic steps from $D_i$ to $R$:
\[
\lambda_{D_i} = \lambda_R + t - i.
\]
\end{rmk}

\subsection{Naming of Decks in Tier 1}

By Proposition~\ref{numberofrdecks}, there are $n - 1$ chains in tier~$1$.  Indeed, the r-decks in tier $1$ are precisely the decks
\[
0110101\cdots010, \quad 0101101\cdots010, \quad \ldots, \quad 010101\cdots0110
\]
that begin and end with $0$, have a single consecutive pair $11$, and otherwise alternate between $0$s and $1$s.  We denote these decks respectively by $[1, 0]$, $[2, 0]$, \ldots, $[n - 1, 0]$,
so that for $1 \leq m \leq n - 1$ we have
\begin{equation}
\label{def:r-decks in tier 1}
[m, 0] := \underbrace{01 \cdots 01}_{m \text{ copies}} \underbrace{10 \cdots 10}_{n - m \text{ copies}}.
\end{equation}
Every other deck in tier $1$ begins with $0$, ends with $1$, and contains one consecutive copy of $00$ and one consecutive copy of $11$ (otherwise alternating between $0$s and $1$s).  We denote these decks by $[m, k]$ where $1 \leq k \leq 2n - 2m - 1$, as follows:
\begin{equation}
\label{def:d-decks in tier 1}
[m, k] := \begin{cases} \overbrace{01\cdots01}^{(k-1)/2 \text{ copies}} 00 \overbrace{10 \cdots 10}^{m - 1 \text{ copies}} 11 \overbrace{01 \cdots 01}^{n - m - (k + 1)/2 \text{ copies}}, & k \text{ odd}, \\[6pt]
 \underbrace{01\cdots01}_{k/2 - 1 \text{ copies}} 0 \, 11 \underbrace{01 \cdots 01}_{m - 1 \text{ copies}} 00 \, 1 \underbrace{01 \cdots 01}_{n -m - k/2 - 1 \text{ copies}}, & k \text{ even}.
 \end{cases}
\end{equation}

\begin{prop}\label{prop:numberofchainsintier1}
Every deck in tier $1$ is of the form $[m, k]$ for exactly one choice of $m$, $k$ such that $1 \leq m \leq n - 1$ and $0 \leq k \leq 2n - 2m - 1$.  When $k > 0$, the deck $[m, k]$ moves with probability $1$ to the deck $[m, k - 1]$.  Furthermore, the $n - 1$ chains in tier $1$ have lengths $2$, $4$, \ldots, $2n - 2$, with the chain ending at r-deck $[m, 0]$ having length $2n - 2m$.
\end{prop}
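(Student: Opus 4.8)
The plan is to treat the three assertions in turn---the bijective parametrization, the deterministic transition $[m,k]\to[m,k-1]$, and the chain lengths---with the transition as the technical heart and the other two following by bookkeeping. The key observation driving everything is that a deterministic move on a d-deck $0w1$ produces $w10$; that is, it is \emph{exactly} a cyclic left-shift by one position, after which we re-fold as in Remark~\ref{01symmetry} by swapping colors precisely when the shifted string begins with a $1$. My intention is to verify directly that this operation carries $[m,k]$ to $[m,k-1]$, keeping $m$ fixed, and then read off the global chain picture.

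For the parametrization I would first confirm that each $[m,k]$ in the stated range is a genuine tier-$1$ deck: a count of the defining blocks shows it contains $n$ ones with exactly one occurrence of the substring $11$ (and, when $k\ge 1$, exactly one occurrence of $00$), hence exactly $n-1$ blocks of $1$s, as tier $1$ requires. Injectivity is then transparent from the positions of the ``doubled'' substrings: in $[m,k]$ the leftmost of the two doubled pairs begins at position $k$ and the other at position $k+2m$, while the parity of $k$ records which is $00$ and which is $11$ (for $k=0$ there is a single $11$ beginning at position $2m$). Thus $(m,k)$ can be recovered from the deck, so the map is injective. Finally, $\sum_{m=1}^{n-1}(2n-2m)=n(n-1)=\binom{n-1}{1}\binom{n}{1}$ equals the size of tier $1$ by Proposition~\ref{prop:tier size}, so the injection is in fact a bijection onto tier $1$.

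For the transition I would split on whether a color swap is needed, i.e.\ on whether the second card of $[m,k]$ is a $1$; inspecting the defining strings shows this happens exactly when $k\ge 2$. When $k=1$ the deck begins $00$, a single shift needs no swap, and simplifying with the identity $0(10)^{j}1=(01)^{j+1}$ shows the result is the r-deck $[m,0]$. When $k\ge 2$ the shifted string begins with $1$, so after swapping colors one recognizes $[m,k-1]$, again after collapsing alternating runs via the same identity (and its mirror $1(01)^{j}0=(10)^{j+1}$). Throughout, the main care is needed at the boundaries, where one or more of the alternating blocks $(01)^{\bullet}$, $(10)^{\bullet}$ degenerates to the empty string (small or extremal $m$, or $k$ near $1$ or $2n-2m-1$); checking that the block counts in the two defining cases match up at these edges is where I expect the real work, and the principal risk of error, to lie.

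With the transition in hand the chain structure is immediate: since $m$ is preserved and $k$ drops by one at each deterministic step, the decks $[m,2n-2m-1],\,[m,2n-2m-2],\,\ldots,\,[m,1],\,[m,0]$ form a single chain ending at the r-deck $[m,0]$. It is maximal because nothing maps to its top: any predecessor would be some $[m,k']$ with $k'-1=2n-2m-1$, forcing $k'=2n-2m$, which is out of range. This chain contains the $2n-2m$ decks with indices $k=0,\ldots,2n-2m-1$, so it has length $2n-2m$; as $m$ runs over $1,\ldots,n-1$ these lengths are $2n-2,2n-4,\ldots,2$, i.e.\ the values $2,4,\ldots,2n-2$, completing the proof.
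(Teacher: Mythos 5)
Your proposal is correct and follows essentially the same route as the paper's proof: verify the decks $[m,k]$ are distinct tier-$1$ decks, match the count $\sum_{m=1}^{n-1}(2n-2m)=n(n-1)$ against Proposition~\ref{prop:tier size} to get exhaustiveness, check the deterministic transition $[m,k]\to[m,k-1]$ by direct inspection of the shift-and-swap move, and read off the chain structure. You simply supply more explicit detail than the paper (injectivity via the positions of the doubled pairs, the case split on whether a color swap is needed, and the maximality of each chain), which is a virtue rather than a divergence.
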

\begin{proof}
By comparing \eqref{def:r-decks in tier 1} and \eqref{def:d-decks in tier 1}, we see that they describe a total of $2 + 4 + \ldots + (2n - 2) = n(n - 1)$ distinct decks, which (by Definition \ref{def:tier}) all belong to tier $1$.  Since (by Proposition \ref{prop:tier size}) there are exactly $n(n - 1)$ decks in tier $1$, these are all of the decks.  The fact that $[m, k]$ moves with probability $1$ to $[m, k - 1]$ when $k > 1$ follows from Definition~\ref{def: move} after observing that removing the first $0$ from a deck in \eqref{def:d-decks in tier 1}, moving it to the end, and swapping $0$s and $1$s if necessary gives the appropriate deck in \eqref{def:r-decks in tier 1} or \eqref{def:d-decks in tier 1}.  Finally, the chain structure follows immediately from the previous considerations.
\end{proof}

\subsection{Number of Steps in Tier 1}
\label{subsec:number of steps}

In this section, we compute the expected number of steps to absorption for every deck in tier $1$.

\begin{theorem}
The average number of moves it takes for the r-deck $[m, 0]$ in tier $1$ to reach the absorbing state is given by
$$
\lambda_{[m, 0]} = n^2 + \frac{2n}{3} + m - \frac{n^2 + 2m^2 - m}{4n^2 - 1}
$$
\label{thm:stepsrdeckst1}
\end{theorem}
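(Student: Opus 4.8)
The plan is to set up a linear recurrence for the quantities $\lambda_{[m,0]}$ by conditioning on the first move. The r-deck $[m,0] = \underbrace{01\cdots 01}_{m}\underbrace{10\cdots 10}_{n-m}$ begins and ends with $0$, so a move inserts the leading $0$ into one of the $2n$ positions, each with probability $\frac{1}{2n}$. I would first identify, for each of the $2n$ insertion positions, which deck in tier $1$ (after applying the color-swap convention of Remark~\ref{01symmetry} when the result begins with $1$) the resulting deck is, expressing each target as some $[m', k']$. By Theorem~\ref{prop:tiers}, the result stays in tier $1$ unless the $0$ is inserted into the unique $11$ block, which sends the deck down to tier $0$ (the absorbing state) — but for an r-deck in tier $1$ the single $11$ is the only place a $0$ can be inserted to reduce the number of $1$-blocks, and inserting there produces the alternating deck. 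So exactly one of the $2n$ positions leads to absorption, some positions lead back to $[m,0]$ itself (a self-loop, as seen in Figure~\ref{markov chain n=3}), and the rest land on d-decks $[m',k']$ in tier $1$.

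**Reducing to the r-decks via chain structure.**
The key simplification is Remark~\ref{lambda+x} together with Proposition~\ref{prop:numberofchainsintier1}: every d-deck $[m',k']$ feeds deterministically down its chain to the r-deck $[m',0]$ at its end, so $\lambda_{[m',k']} = \lambda_{[m',0]} + k'$, where $k'$ is exactly the number of deterministic steps remaining. Substituting these into the one-step conditioning equation, I would collect terms and rewrite the entire system purely in terms of the unknowns $\lambda_{[1,0]},\ldots,\lambda_{[n-1,0]}$, plus explicit constants coming from the $+k'$ offsets. This collapses the recurrence from one indexed by all $n(n-1)$ decks of tier $1$ down to a system of $n-1$ equations in the $n-1$ r-deck expectations. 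The main bookkeeping task here is to determine, for each insertion position into $[m,0]$, the pair $(m',k')$ of the landing deck; I expect the answer to have a clean pattern, with insertions near the front shifting $m$ up or down by a controlled amount and contributing a predictable deterministic offset $k'$.

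**Solving the system.**
Once the recurrence is assembled, I anticipate it takes the shape of a second-order linear recurrence in $m$ (relating $\lambda_{[m,0]}$ to $\lambda_{[m\pm 1,0]}$) with an inhomogeneous term that is polynomial in $m$ and $n$. The homogeneous part should have constant (or simple rational) coefficients depending on $n$, and the particular solution will be a polynomial in $m$ whose coefficients are rational functions of $n$. Given the target formula
\[
\lambda_{[m,0]} = n^2 + \frac{2n}{3} + m - \frac{n^2 + 2m^2 - m}{4n^2 - 1},
\]
I would guess an ansatz quadratic in $m$ with $n$-dependent coefficients and solve by matching, using the boundary/normalization data (for instance, the value at $m = n-1$, which is the shortest chain, or a direct small-$n$ check against $N_2$ in the excerpt and the $n=3$ chain). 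The denominator $4n^2 - 1 = (2n-1)(2n+1)$ strongly suggests that the characteristic equation of the recurrence factors with roots involving $2n\pm 1$, which is where that denominator will enter.

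**Expected main obstacle.**
The hard part will be the combinatorial classification of the $2n$ insertion targets: correctly tracking, under the color-swap folding of Remark~\ref{01symmetry}, exactly which $[m',k']$ each insertion produces and how many positions map to each, since miscounting even one position corrupts the whole recurrence. I expect the self-loop multiplicity and the absorbing-edge multiplicity (both visible as the $\frac{1}{6}$-type weights in Figure~\ref{markov chain n=3}) to require particular care. Once that transition structure is pinned down, solving the resulting recurrence should be routine algebra, and verifying the closed form reduces to checking it satisfies the recurrence plus one boundary value.
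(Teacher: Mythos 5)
Your proposal follows essentially the same route as the paper's proof: condition on the first move from $[m,0]$, classify the $2n$ insertion targets, use Proposition~\ref{prop:numberofchainsintier1} and Remark~\ref{lambda+x} to replace each $\lambda_{[m',k']}$ by $\lambda_{[m',0]}+k'$, and thereby reduce to a linear system in the $n-1$ r-deck expectations that the claimed formula is then checked to satisfy. The two items you deferred are exactly what the paper supplies: the insertion targets work out to be $[i,2m-2i]$ for $i=1,\dots,m-1$ (insertions left of the $11$) and $[j,2m-1]$ for $j=1,\dots,n-m$ (insertions right of it), each reached with probability $1/n$ since two insertion positions yield the same deck, plus a $1/(2n)$ self-loop and $1/(2n)$ absorption; and the reduced system is not the local second-order recurrence in $m$ you anticipated (each equation involves partial sums $\sum_{i=1}^{m-1}\lambda_{[i,0]}$ and $\sum_{j=1}^{n-m}\lambda_{[j,0]}$ of essentially all the unknowns), so rather than solving it the paper notes it is diagonally dominant, hence uniquely solvable, which is exactly the justification your ansatz-and-verify step needs in order to conclude that the verified solution really is the vector of expected absorption times.
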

\begin{proof}
We consider the moves that are possible beginning with the r-deck $[m, 0]$, based on where we insert the leading $0$.  With probability $\frac{1}{2n}$, we insert it back at the front of the deck and end up back at $[m, 0]$, and with probability $\frac{1}{2n}$ we insert it between the two consecutive $1$s and end up at the absorbing state.  Otherwise, we insert the top card next to one of the other $0$s, resulting in $n - 1$ possible decks each occurring with probability $\frac{1}{n}$.  We consider two cases.

If we insert the top $0$ to the right of the $11$ then, after swapping $0$s and $1$s, the $00$ precedes the $11$ in the result, so we are in the $k$ odd case of \eqref{def:d-decks in tier 1}.  In particular, in all such cases, the number of $01$ pairs before the $00$ is $m - 1$, so if the new deck is $[m', k]$ then we have $\frac{k - 1}{2} = m - 1$, or $k = 2m - 1$.  The associated $m'$ values range from $1$ (if we insert the top $0$ immediately following the $11$) to $n - m$ (if we insert the top $0$ at the very end).  So in this case we end up at the decks $[1, 2m - 1]$, $[2, 2m - 1]$, \ldots, $[n - m, 2m - 1]$.  

On the other hand, suppose we insert the top $0$ to the left of the $11$, producing a deck $[m', k]$.  After swapping $0$s and $1$s, the $11$ precedes the $00$ in the result $[m', k]$, so we are in the $k$ even case of \eqref{def:d-decks in tier 1}.  Since the top card was inserted to the left of what was the $11$, the number of cards before $11$ in $[m, 0]$ is the same as the number of cards before $00$ in $[m', k]$.  In particular, the deck $[m, 0]$ has $2m - 1$ cards preceding $11$, whereas $[m', k]$ has $2(k/2 - 1) + 3 + 2(m' - 1)$ cards preceding $00$. Setting these two numbers equal gives $k = 2m - 2m'$.  Therefore, the decks produced in this case are $[1, 2m - 2]$, $[2, 2m - 4]$, \ldots, $[m - 1, 2]$.

Combining the preceding arguments, we have
\[
\lambda_{[m, 0]} = \underbrace{1}_{\text{took a step}} + \underbrace{\frac{1}{2n} \cdot \lambda_{[m, 0]}}_{\text{insert at top}} + \underbrace{\frac{1}{2n} \cdot 0}_{\text{absorbed}} + 
\frac{1}{n}\Bigg[\sum_{i=1}^{m-1}\lambda_{[i, 2m-2i]} + \sum_{j=1}^{n-m} \lambda_{[j, 2m - 1]} \Bigg].
\]
By Proposition~\ref{prop:numberofchainsintier1} and Remark~\ref{lambda+x}, we have $\lambda_{[a, b]} = \lambda_{[a, 0]} + b$, so we can rewrite the preceding equation as
\begin{align*}
\lambda_{[m, 0]} & = 1 + \frac{1}{2n} \cdot \lambda_{[m, 0]} + 
\frac{1}{n}\Bigg[m(m - 1) + \sum_{i=1}^{m-1}\lambda_{[i, 0]} + (2m - 1)(n - m) + \sum_{j=1}^{n-m} \lambda_{[j, 0]} \Bigg] \\
& = \frac{m(2n - m)}{n} + \frac{1}{2n} \cdot \lambda_{[m, 0]} + 
\frac{1}{n}\Bigg[ \sum_{i=1}^{m-1}\lambda_{[i, 0]} + \sum_{j=1}^{n-m} \lambda_{[j, 0]} \Bigg].
\end{align*}
Considering all values of $m$, this is a diagonally dominant system of $n - 1$ affine equations in the $n - 1$ variables $\lambda_{[m, 0]}$. Therefore the solution is unique, and so it suffices to check that the proposed values actually are a solution.  Considering just the sums on the right-hand side, we have
\begin{multline*}
\sum_{i=1}^{m-1}\left(n^2 + \frac{2n}{3} + i - \frac{n^2 + 2i^2 - i}{4n^2 - 1}\right) + \sum_{j=1}^{n-m} \left(n^2 + \frac{2n}{3} + j - \frac{n^2 + 2j^2 - j}{4n^2 - 1}\right) = 
 {}\\{} =
(n - 1)\left(n^2 + \frac{2n}{3} - \frac{n^2}{4n^2 - 1}\right) + 
{}\\{} + 
\left(\frac{m(m-1)}{2} + \frac{(n - m + 1)(n - m)}{2}\right)\left(1 + \frac{1}{4n^2 - 1}\right) -
{}\\{} - \left(\frac{m(m-1)(2m - 1)}{6} + \frac{(n - m + 1)(n - m)(2n - 2m + 1)}{6}\right)\frac{2}{4n^2 - 1} =
{} \\ {} =
n^3 + \frac{n^2}{6} - mn + \frac{(12m^2 - 6m - 7n - 2)n}{6(2n + 1)}.
\end{multline*}
Combining with the other terms on the right-hand side, this becomes
\begin{multline*}
\frac{m(2n - m)}{n} + \frac{1}{2n} \cdot \left(n^2 + \frac{2n}{3} + m - \frac{n^2 + 2m^2 - m}{4n^2 - 1}\right) +
n^2 + \frac{n}{6} - m + \frac{12m^2 - 6m - 7n - 2}{6(2n + 1)} = 
{}\\{}=
n^2 + \frac{2n}{3} + m - \frac{n^2 + 2m^2 - m}{4n^2 - 1},
\end{multline*}
matching the claimed value on the left-hand side.  This completes the proof.
\end{proof}
We can extend Theorem~\ref{thm:stepsrdeckst1} to any deck in tier $1$ based on Remark~\ref{lambda+x}.

\begin{corollary}
The average number of moves it takes for the arbitrary deck $[m, k]$ in tier $1$ to reach the absorbing state is given by
\[
\lambda_{[m,k]} = 
n^2 + \frac{2n}{3} + m - \frac{n^2 + 2m^2 - m}{4n^2 - 1} + k.
\]
\label{cor:t1steps}
\end{corollary}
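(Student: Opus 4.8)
The plan is to treat this as an immediate consequence of the already-established formula for r-decks in Theorem~\ref{thm:stepsrdeckst1}, together with the deterministic chain structure of tier~$1$. The key observation is that an arbitrary deck $[m,k]$ with $k > 0$ is a d-deck, and hence sits on a chain whose end is an r-deck; by Proposition~\ref{prop:numberofchainsintier1}, that r-deck is precisely $[m,0]$, and the deck $[m,k]$ reaches it through the deterministic moves
\[
[m,k] \to [m,k-1] \to \cdots \to [m,1] \to [m,0].
\]
Thus the number of deterministic steps separating $[m,k]$ from the end of its chain is exactly $k$.

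First I would record this chain structure, citing Proposition~\ref{prop:numberofchainsintier1} for the fact that $[m,k]$ moves with probability~$1$ to $[m,k-1]$ whenever $k>0$, so that there are exactly $k$ steps from $[m,k]$ down to the r-deck $[m,0]$. Next I would apply Remark~\ref{lambda+x}, which says that within a chain the expected absorption time of a d-deck equals the expected absorption time of the terminal r-deck plus the number of deterministic steps to reach it; this yields
\[
\lambda_{[m,k]} = \lambda_{[m,0]} + k.
\]
Finally I would substitute the explicit value of $\lambda_{[m,0]}$ from Theorem~\ref{thm:stepsrdeckst1} to obtain the claimed formula. The case $k=0$ is covered directly by Theorem~\ref{thm:stepsrdeckst1} (and is consistent with the $+k$ term vanishing).

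There is essentially no obstacle here: all of the genuine difficulty — solving the diagonally dominant linear system and verifying the closed form — was dispatched in the proof of Theorem~\ref{thm:stepsrdeckst1}. The only point requiring a moment's care is confirming that the terminal r-deck of the chain containing $[m,k]$ really is $[m,0]$ (rather than some other r-deck), so that the additive constant $\lambda_{[m,0]}$ matches the $m$ appearing in the formula; but this is exactly the content of Proposition~\ref{prop:numberofchainsintier1}, which guarantees that the second coordinate decreases by one under each move while the first coordinate $m$ is preserved. Hence the corollary follows in a single line once these ingredients are assembled.
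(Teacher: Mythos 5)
Your proposal is correct and matches the paper's argument exactly: the paper also derives the result as an immediate consequence of Remark~\ref{lambda+x} and Theorem~\ref{thm:stepsrdeckst1}, with the chain structure $[m,k] \to [m,k-1] \to \cdots \to [m,0]$ from Proposition~\ref{prop:numberofchainsintier1} supplying the identity $\lambda_{[m,k]} = \lambda_{[m,0]} + k$. Your added care in confirming that the terminal r-deck is $[m,0]$ is a sound (if brief) elaboration of what the paper leaves implicit.
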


\begin{proof}
This is an immediate consequence of Remark~\ref{lambda+x} and Theorem~\ref{thm:stepsrdeckst1}.
\end{proof}

\begin{corollary}
The deck in tier 1 with the smallest average number of moves until absorption is
\begin{equation}
[1, 0] = 011\underbrace{0101\cdots01}_{n-2 ~\text{pairs}}0.
\label{eqn:bestdeckt1}
\end{equation}
This deck takes $ \lambda_{[1, 0]} = n^2+\dfrac{2n}{3}+\dfrac{3}{4}-\dfrac{5}{4\left(4n^2-1\right)}$  moves on average to reach the absorbing state.
The deck with the largest average number of moves is  
\begin{equation}
[1, 2n - 3] =  \underbrace{0101\cdots01}_{n-2 ~\text{pairs}}0011.
\label{eqn:worstdeckt1}
\end{equation}
This deck takes $\lambda_{[1, 2n - 3]}= n^2+\dfrac{8n}{3}-\dfrac{9}{4}-\dfrac{5}{4\left(4n^2-1\right)}$  moves on average to reach the absorbing state.
\label{cor:bestworstt1}
\end{corollary}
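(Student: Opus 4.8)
The plan is to treat the corollary as a discrete optimization problem driven entirely by the closed-form expression from Corollary~\ref{cor:t1steps}. Writing $\phi(m) := m - \frac{n^2 + 2m^2 - m}{4n^2 - 1}$, that corollary reads $\lambda_{[m,k]} = n^2 + \frac{2n}{3} + \phi(m) + k$, where by Proposition~\ref{prop:numberofchainsintier1} the parameters range over $1 \le m \le n - 1$ and $0 \le k \le 2n - 2m - 1$. Since the dependence on $k$ enters through a single additive term with coefficient $+1$, the minimization problem separates cleanly, while the maximization problem couples $k$ and $m$ only through the upper limit of the $k$-range; this coupling is the one point that needs genuine care.

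For the minimum, I would first note that lowering $k$ always lowers $\lambda_{[m,k]}$ and that $k = 0$ is admissible for every $m$, so the minimizer has $k = 0$ and it remains to minimize $\phi(m)$ over $1 \le m \le n-1$. I would establish monotonicity through the finite difference $\phi(m+1) - \phi(m) = 1 - \frac{4m+1}{4n^2-1}$, which is strictly positive throughout the range because $4m + 1 \le 4n - 3 < 4n^2 - 1$ for $n \ge 2$. Hence $\phi$ is strictly increasing and the minimum is attained at $m = 1$, $k = 0$, i.e.\ the deck $[1,0]$; substituting $m = 1$ into \eqref{def:r-decks in tier 1} yields the string \eqref{eqn:bestdeckt1}. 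Finally, plugging $m=1$, $k=0$ into the formula and simplifying $1 - \frac{n^2+1}{4n^2-1} = \frac{3n^2-2}{4n^2-1} = \frac{3}{4} - \frac{5}{4(4n^2-1)}$ recovers the claimed value of $\lambda_{[1,0]}$.

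For the maximum, the admissible range of $k$ depends on $m$, so I cannot optimize the two parameters independently. For each fixed $m$ the largest admissible value is $k = 2n - 2m - 1$; I would substitute this boundary value and then maximize the resulting single-variable function $n^2 + \frac{2n}{3} + \phi(m) + 2n - 2m - 1$ over $1 \le m \le n-1$. Setting $\psi(m) := \phi(m) - 2m$, the finite difference $\psi(m+1) - \psi(m) = -1 - \frac{4m+1}{4n^2-1} < 0$ shows $\psi$ is strictly decreasing, so the maximum occurs at $m = 1$, forcing $k = 2n - 3$. Substituting $m = 1$, $k = 2n-3$ into the $k$-odd case of \eqref{def:d-decks in tier 1} produces \eqref{eqn:worstdeckt1}, and since $\lambda_{[1,2n-3]} = \lambda_{[1,0]} + (2n-3)$ by Remark~\ref{lambda+x}, adding $2n-3$ to the value just computed gives $n^2 + \frac{8n}{3} - \frac{9}{4} - \frac{5}{4(4n^2-1)}$.

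I expect the main obstacle to be purely organizational rather than technical: it is the maximization step, where the $m$-dependence of the upper limit on $k$ means one must substitute the optimal $k = 2n - 2m - 1$ \emph{before} optimizing in $m$, rather than naively pushing both parameters to their separate extremes. Once that substitution is made, both extremal problems collapse to checking the sign of a single finite difference, and the remaining arithmetic verifications of the two explicit values are elementary.
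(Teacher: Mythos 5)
Your proposal is correct and follows essentially the same route as the paper's proof: both use the closed-form expression for $\lambda_{[m,k]}$ together with Remark~\ref{lambda+x} to reduce each extremal problem to a single-variable monotonicity check in $m$ (with $k=0$ for the minimum and the boundary value $k=2n-2m-1$ for the maximum), verified by the sign of a finite difference. Your difference computations match the paper's (indeed, your $-1-\frac{4m+1}{4n^2-1}$ for the maximum is the exact value, where the paper writes $-1-\frac{4m-1}{4n^2-1}$, a harmless slip since both are negative), so nothing further is needed.
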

\begin{proof}
First we consider the minimum.  By Remark~\ref{lambda+x}, this will be achieved by an r-deck $[m, 0]$ for some $m \in \{1, \ldots, n - 1\}$.  By Theorem~\ref{thm:stepsrdeckst1}, we have
\begin{align*}
\lambda_{[m + 1, 0]} - \lambda_{[m, 0]} & =  1 + \frac{2m^2 - m - 2(m + 1)^2 + (m + 1)}{4n^2 - 1} \\
& = \frac{4n^2 - 4m - 2}{4n^2 - 1} \\
& \geq \frac{4n^2 - 4n + 2}{4n^2 - 1} \\
& > 0
\end{align*}
(where in the penultimate step we use $m \leq n - 1$ and in the last step we use $4n^2 - 4n + 1 = (2n - 1)^2 \geq 0$).  Therefore the sequence $(\lambda_{[m, 0]})$ is increasing, and so the minimum value is indeed achieved when $m = 1$.

Next we consider the maximum.  By Remark~\ref{lambda+x}, this will be achieved by a deck $[m, k]$ for which $k = 2n - 2m - 1$, i.e., a deck that is as far as possible from the r-deck in its chain.  Comparing two such decks, we have by Corollary~\ref{cor:t1steps} that
\begin{align*}
\lambda_{[m + 1, 2n - 2(m + 1) - 1]} - \lambda_{[m, 2n - 2m - 1]} & = 1+ \frac{2m^2 - m - 2(m + 1)^2 + (m + 1)}{4n^2 - 1} - 2 \\
& = -1 - \frac{4m - 1}{4n^2 - 1} \\
& < 0.
\end{align*}
Therefore the sequence $(\lambda_{[m, 2n - 2m - 1]})$ is decreasing, and so the maximum value is indeed achieved when $m = 1$.
\end{proof}

\section{Average number of moves for tier 2 and beyond}
\label{sec:tier2+}
In this section, we consider the problem of computing the average number of moves required for decks in tiers further from the absorbing state.  Unfortunately, it seems unlikely that simple closed-form formulas like those in Corollary~\ref{cor:t1steps} exist for higher tiers -- see Section~\ref{sec:exact values} for some evidence along these lines.  Therefore, we instead focus our efforts on bounding the values $\lambda_D$ for $D$ a deck in tier $k$.

We begin with some basic properties of tiers.  First, we compute the length of the longest chain in tier $k$.  

\begin{prop} 
The length of the longest chain in tier $k$ is $2n - 2k$, and there is exactly one chain of this length.
\label{prop:longest chain}
\end{prop}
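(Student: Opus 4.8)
The plan is to recast chains in terms of cyclic rotations. First I would temporarily undo the folding of Remark~\ref{01symmetry}, so that decks are arbitrary binary strings of length $2n$ with $n$ zeros and $n$ ones, and the deterministic move becomes literally the left cyclic rotation $c_1 c_2 \cdots c_{2n} \mapsto c_2 \cdots c_{2n} c_1$. Fixing a single cyclic word $w$, its $2n$ rotations are exactly the decks joined by deterministic edges, and the rotation starting at position $j$ is a d-deck precisely when $c_{j-1} \neq c_j$ (its first and last letters differ), i.e.\ when position $j$ begins a block of $w$, and an r-deck precisely when $j$ is interior to a block. A chain is therefore a maximal run of consecutive block-beginning start positions together with the single interior start that follows it, so its length equals one more than the number of positions in that run of consecutive block-beginnings. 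Reinstating the folding only relabels decks by color-complement, which preserves block boundaries and hence leaves this combinatorial structure unchanged.

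Next I would translate the tier into block data: such a word $w$ lies in tier $k$ exactly when it has $n-k$ blocks of each color, for a total of $2(n-k)$ blocks. Since the $n-k$ blocks of a given color partition its $n$ letters, the total ``excess'' $\sum(\text{block length}-1)$ equals $k$ within each color class; in particular, for $k \geq 1$ there is at least one block of each color of length $\geq 2$. A run of consecutive block-beginnings corresponds to a maximal run of singleton blocks capped on both sides by non-singleton blocks, together with one terminating non-singleton block. Because there are at least two non-singleton blocks (one of each color), the run of singletons contains at most $2(n-k)-2$ of the $2(n-k)$ blocks. This yields the upper bound: every chain in tier $k$ has length at most $\bigl(2(n-k)-2\bigr)+2 = 2n-2k$.

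Finally, for sharpness and uniqueness I would analyze the equality case. Chain length $2n-2k$ forces a run of exactly $2(n-k)-2$ singleton blocks, so $w$ has exactly two non-singleton blocks; the color-balance count then forces one block of each color, each of length exactly $k+1$ (absorbing the full excess $k$ of its color), and forces them to be cyclically adjacent so that the remaining $2(n-k)-2$ singletons form a single run. Up to rotation and color-complement this word is unique, and it contains exactly one maximal run of block-beginnings of the extremal length; hence there is exactly one chain of length $2n-2k$ in tier $k$. I would conclude by exhibiting the extremal word explicitly, namely $\underbrace{0\cdots0}_{k+1}\underbrace{1\cdots1}_{k+1}\underbrace{0101\cdots01}_{n-k-1 \text{ pairs}}$ read cyclically, and checking directly that it produces a chain of the claimed length.

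The step I expect to require the most care is the dictionary itself: verifying that the folded deterministic move corresponds exactly to a cyclic rotation once Remark~\ref{01symmetry} is undone, and that the color-complement relabeling leaves block structure, and hence chain lengths, intact. The color-balance bookkeeping that pins both extremal blocks to length exactly $k+1$ and forces their adjacency is the other delicate point, though it becomes routine once the excess-per-color count is in hand.
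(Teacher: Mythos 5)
Your cyclic-word dictionary is a genuinely different route from the paper's proof: the paper simply exhibits the deck $\underbrace{0101\cdots010}_{2n-2k-1}0^k1^{k+1}$, counts its $2n-2k-1$ deterministic moves, and argues that a longer chain would force at least $n-k+1$ blocks of $1$s. Your upper-bound half is sound and arguably cleaner: a chain of length $t\geq 2$ corresponds to $t-2$ consecutive singleton blocks capped by non-singleton blocks, each color carries excess $k\geq 1$ and therefore owns at least one non-singleton block, so $t-2\leq 2(n-k)-2$. The equality analysis pinning the cyclic word to $0^{k+1}1^{k+1}(01)^{n-k-1}$ up to rotation and complement is also correct, and this is indeed the paper's extremal deck \eqref{eq:longest chain} read cyclically.

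However, there is a genuine gap at exactly the step you flagged: the folding dictionary, and it invalidates your uniqueness argument in one case. Folding is not a relabeling of decks; it is a two-to-one identification of each deck with its color-complement, and consequently it identifies each unfolded chain arising from $w$ with the corresponding chain arising from $\bar w$. So the number of folded chains is the number of extremal runs in $w$ and $\bar w$ taken \emph{modulo} this identification, not the number of extremal runs in $w$. For $1\leq k\leq n-2$ your shortcut happens to give the right count (one run in $w$, one in $\bar w$, glued into one folded chain). But at $k=n-1$ the extremal word $w=0^n1^n$ is cyclically self-complementary ($\bar w = 1^n0^n$ is a rotation of $w$), and $w$ contains \emph{two} maximal runs of the extremal length, one at the start of each big block: the unfolded decks $0^n1^n$ and $1^n0^n$ each begin a chain of length $2$. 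Thus your claim that the extremal word ``contains exactly one maximal run of block-beginnings of the extremal length'' is false there. Uniqueness still holds, but only because those two runs fold to the same chain ($1^n0^n$ and $0^n1^n$ are the same folded deck, as are their successors $1^{n-1}0^n1$ and $0^{n-1}1^n0$) --- an identification your proposal never performs. The repair is to state that, since the move commutes with complementation and d-deck-ness is complement-invariant, folded chains correspond bijectively to unfolded maximal runs modulo the complementation symmetry, and then to check that this quotient contains exactly one extremal class both in the generic case and in the self-complementary case $k=n-1$.
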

\begin{proof}
Let us start with the following deck:
\begin{equation}
\label{eq:longest chain}
\underbrace{01010\cdots1010}_{2n-2k-1}\underbrace{00\cdots0}_{k}\underbrace{11\cdots1}_{k+1}
\end{equation}
This deck is in tier $k$, since it begins with $n-k-1$ blocks of $1$s in the leftmost string of $2n-2k-1$ $0$s and $1$s and another block of $1$s towards the rightmost part of the string. Notice how the leftmost string of alternating $0$s and $1$s determine the number of deterministic moves till we reach the following deck after $2n-2k-1$ moves:
\[
\underbrace{00\cdots0}_{k}\underbrace{11\cdots1}_{k+1}\underbrace{0101\cdots010}_{2n-2k-1}
\]
This gives us a chain of length $2n-2k$. This is the longest chain in tier $k$. Say we had a longer chain. That would mean that the starting deck must begin with a string of $2n-2k$ pairs of $01$s followed by a $1$ and must end with a $1$. But this would mean that there are at least $n-k+1$ blocks of $1$s which by definition of a tier, is a deck in tier $k-1$ or below. Thus, there is no chain of length longer than $2n-2k$ in tier $k$. Moreover, the only way to produce a chain of exactly the length $2n-2k$ in tier $k$ is to have $2n-2k-1$ consecutive deterministic steps, and by the same reasoning we are forced to start with the deck \eqref{eq:longest chain}.
\end{proof}

We know by Proposition~\ref{prop:tiers} that a deck in tier $k$ can either stay in tier $k$ or go to tier $k-1$.  We now refine this result by calculating the associated probability of going from an r-deck in tier $k$ to a deck in tier $k-1$ in one move.
\begin{theorem}
\label{thm:where next?}
The probability that an r-deck in tier $k$ goes to tier $k-1$ in one move is $\dfrac{k}{2n}$.
\end{theorem}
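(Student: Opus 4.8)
The plan is to analyze what happens when we insert the leading $0$ of an r-deck $D$ in tier $k$ and count how many of the $2n$ insertion positions result in a deck in tier $k-1$. By Theorem~\ref{prop:tiers}, a move either keeps the deck in tier $k$ or sends it to tier $k-1$, and by the proof of that theorem the descent to tier $k-1$ happens precisely when the leading $0$ is inserted strictly between two adjacent $1$s (thereby splitting a block of $1$s and increasing the number of $1$-blocks from $n-k$ to $n-k+1$). So the whole problem reduces to counting the number of insertion positions that fall inside a block of $1$s, i.e., positions lying strictly between two consecutive $1$s.

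First I would set up the counting carefully. An r-deck in tier $k$ has exactly $n-k$ blocks of $1$s, with block sizes $b_1, b_2, \ldots, b_{n-k}$ summing to $n$ (the total number of $1$s). Within a block of $b_i$ consecutive $1$s there are exactly $b_i - 1$ ``internal'' gaps between adjacent $1$s into which the leading $0$ can be inserted to split that block. Summing over all blocks, the total number of favorable insertion positions is $\sum_{i=1}^{n-k}(b_i - 1) = \left(\sum_{i=1}^{n-k} b_i\right) - (n-k) = n - (n-k) = k$. Since the insertion is uniform over all $2n$ positions, each favorable position contributes probability $\frac{1}{2n}$, giving total probability $\frac{k}{2n}$.

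The one subtlety I would want to nail down is the bookkeeping of ``positions,'' since inserting the top $0$ is described in Definition~\ref{def: move} as choosing one of $2n$ equally likely slots. I would make explicit that after removing the leading $0$ we are reinserting it into one of $2n$ slots among the remaining $2n-1$ cards, and that a slot splits a $1$-block exactly when it lies immediately between two cards that are both $1$. The count of such slots depends only on the adjacencies among the $1$s in $D$, which are exactly the internal gaps of the $1$-blocks; the identity $\sum (b_i - 1) = k$ is just the rephrasing (already noted after Definition~\ref{def:tier}) that a tier-$k$ deck has precisely $k$ consecutive substrings $11$. Indeed, the cleanest route is to observe directly that the number of favorable insertion positions equals the number of $11$ adjacencies, which is $k$ by the very definition of tier $k$.

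The main obstacle, though it is minor, is to confirm that the color-swap convention of Remark~\ref{01symmetry} does not disturb the count: inserting the leading $0$ of an r-deck never moves the card to the top position in a way that forces a color swap (since an r-deck ends in $0$, the bottom card after reinsertion is determined and the relevant descents are genuinely the block-splitting insertions), so the tier of the resulting deck is computed before any relabeling and the count of $11$-substrings is a color-swap invariant in the sense that it matches the number of favorable slots. Once this is checked, the conclusion $\frac{k}{2n}$ follows immediately from the uniform-insertion probability.
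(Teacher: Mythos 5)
Your proof is correct and takes essentially the same route as the paper's: by the proof of Theorem~\ref{prop:tiers}, a move descends to tier $k-1$ exactly when the leading $0$ is inserted between two adjacent $1$s, and the number of such slots equals the number of $11$-substrings, which is $k$ for a tier-$k$ deck, giving probability $\frac{k}{2n}$. Your block-size identity $\sum_i (b_i - 1) = k$ is a fine (if redundant) verification of that count, and the color-swap worry in your last paragraph is already handled inside the proof of Theorem~\ref{prop:tiers}, which shows that the tier of the resulting deck is determined by the insertion spot whether or not a swap is applied.
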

\begin{proof}
As mentioned immediately after Definition~\ref{def:tier}, a deck belongs to tier $k$ if and only if it has $k$ pairs of adjacent $1$s.  Thus, following the proof of Proposition~\ref{prop:tiers}, there are $k$ possible places the leading $0$ of an r-deck can insert itself to go to a deck in a lower tier, among $2n$ possible insertion locations. So the probability that an r-deck in tier $k$ goes to tier $k-1$ in one move is $\dfrac{k}{2n}$.
\end{proof}

We now establish notation for the quantities we seek to bound.

\begin{definition}
Let $m_k$ denote the minimum value of $\lambda_D$ for $D$ in tier $k$, let $M_k$ denote the maximum value of $\lambda_D$ for $D$ in tier $k$, and let $M_k^\mathrm{r}$ denote the maximum value of $\lambda_D$ for an r-deck $D$ in tier $k$.
\label{notation}
\end{definition}
We don't need a separate notation for the minimum value of $\lambda_D$ for r-decks in tier $k$ because, by Remark~\ref{lambda+x}, $m_k$ is always achieved on an r-deck.

Our next result gives a recursive bound on the quantity $m_k$.

\begin{prop}
For all $k$ ranging from $1$ to $n-1$, we have
\[
m_k \geq 1 +\frac{k}{2n}m_{k-1}+ \frac{2n-k}{2n}m_k.
\]
\label{min_k}
\end{prop}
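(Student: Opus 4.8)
The plan is to exploit the fact (noted immediately after Definition~\ref{notation}) that the minimum $m_k$ is achieved on an r-deck, combined with the standard first-step analysis for expected absorption times. First I would fix an r-deck $D$ in tier $k$ with $\lambda_D = m_k$. Conditioning on the outcome of a single move from $D$, and using that every move costs exactly one step, I would write the first-step identity
\[
\lambda_D = 1 + \sum_{D'} P(D \to D')\,\lambda_{D'},
\]
where the sum ranges over all decks $D'$ reachable from $D$ in one move.

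The next step is to split this sum according to the tier of $D'$. By Theorem~\ref{prop:tiers}, every such $D'$ lies in tier $k$ or tier $k-1$, and by Theorem~\ref{thm:where next?} the total probability of landing in tier $k-1$ is exactly $\frac{k}{2n}$; consequently the total probability of remaining in tier $k$ is $\frac{2n-k}{2n}$. I would then bound each surviving term from below: every $D'$ in tier $k-1$ satisfies $\lambda_{D'} \geq m_{k-1}$ by definition of $m_{k-1}$, and every $D'$ in tier $k$ satisfies $\lambda_{D'} \geq m_k$ by definition of $m_k$. Substituting these bounds and collecting the two block probabilities yields
\[
m_k = \lambda_D \geq 1 + \frac{k}{2n}\,m_{k-1} + \frac{2n-k}{2n}\,m_k,
\]
which is precisely the claimed inequality.

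I do not expect a serious obstacle here; the one point requiring genuine care is the appeal to Theorem~\ref{thm:where next?}, which computes the escape probability $\frac{k}{2n}$ only for r-decks. This is exactly why it is essential to choose $D$ to be an r-deck achieving the minimum, rather than an arbitrary minimizer --- and the remark following Definition~\ref{notation} guarantees such a choice exists. Applied verbatim to a d-deck the argument would break, since a d-deck moves deterministically and its escape probability is either $0$ or $1$ rather than $\frac{k}{2n}$.
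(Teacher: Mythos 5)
Your proposal is correct and follows essentially the same route as the paper's proof: choose a minimizing r-deck (justified by Remark~\ref{lambda+x}), apply first-step analysis, use Theorem~\ref{thm:where next?} for the transition probabilities $\frac{k}{2n}$ and $\frac{2n-k}{2n}$, and bound the conditional expectations below by $m_{k-1}$ and $m_k$ respectively. Your closing observation about why the argument requires an r-deck minimizer matches exactly the role this fact plays in the paper's argument.
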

\begin{proof}
Let $D$ be a deck in tier $k$ such that $\lambda_D = m_k$.  As just mentioned above, we know that $D$ is an r-deck. Therefore, by Theorem~\ref{thm:where next?}, taking one move from deck $D$ results in a deck in tier $k-1$ with probability $\dfrac{k}{2n}$.  In this case, the expected number of moves remaining is at least $m_{k - 1}$.  The other $1-\dfrac{k}{2n} = \dfrac{2n-k}{2n}$ of the time, after taking one move the deck remains in tier $k$.  In this case, the expected number of moves remaining is at least $m_k$.  Therefore, the expected number of moves \emph{after} the first (random) move is at least $\frac{k}{2n}m_{k-1}+ \frac{2n-k}{2n}m_k$, and the claimed result follows immediately.
\end{proof}

Similarly, we can give a recursive bound on the quantity $M_k$.

\begin{prop}
For $k$ ranging from $1$ to $n-1$, we have
\[
M_k \leq 2n-2k +\dfrac{k}{2n}M_{k-1}+ \dfrac{2n-k}{2n}M_k.
\]
\label{prop:max_k}
\end{prop}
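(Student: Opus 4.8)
The plan is to mirror the proof of Proposition~\ref{min_k}, but with one essential twist: whereas the minimum $m_k$ is attained on an r-deck, the maximum $M_k$ is attained at the \emph{start} of a chain, which is typically a d-deck. Since a d-deck moves deterministically, a naive one-step analysis on the maximizing deck only yields the useless bound $M_k \le 1 + M_k$. Instead I would route the argument through the r-deck at the bottom of the chain, combining the chain structure of Remark~\ref{lambda+x} with the longest-chain bound of Proposition~\ref{prop:longest chain} and the transition probability of Theorem~\ref{thm:where next?}.

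Concretely, I would let $D$ be a deck in tier $k$ with $\lambda_D = M_k$, and consider the chain $D_1 \to D_2 \to \cdots \to D_{t-1} \to R$ containing it, of length $t$ and ending in the r-deck $R$ (by the proof of Proposition~\ref{prop:tiers}, deterministic moves preserve the tier, so the entire chain, and in particular $R$, lies in tier $k$). By Remark~\ref{lambda+x} we have $\lambda_{D_i} = \lambda_R + (t - i)$, which is decreasing in $i$; hence $\lambda$ is maximized at the top of each chain, and we may assume $D = D_1$, so that $\lambda_D = \lambda_R + (t - 1)$. By Proposition~\ref{prop:longest chain} the chain length satisfies $t \le 2n - 2k$, giving $t - 1 \le 2n - 2k - 1$.

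Next I would bound $\lambda_R$ by a single-step analysis at the terminal r-deck $R$. By Theorem~\ref{thm:where next?}, one move from $R$ lands in tier $k - 1$ with probability $\frac{k}{2n}$ and stays in tier $k$ with probability $\frac{2n - k}{2n}$. In the first case the expected number of remaining moves is at most $M_{k-1}$, and in the second at most $M_k$, so
\[
\lambda_R \le 1 + \frac{k}{2n} M_{k-1} + \frac{2n - k}{2n} M_k.
\]
Combining this with $\lambda_D = \lambda_R + (t-1)$ and $t - 1 \le 2n - 2k - 1$ yields
\[
M_k = \lambda_D \le (2n - 2k - 1) + 1 + \frac{k}{2n} M_{k-1} + \frac{2n - k}{2n} M_k = 2n - 2k + \frac{k}{2n} M_{k-1} + \frac{2n - k}{2n} M_k,
\]
which is exactly the claimed inequality.

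The only real obstacle is the observation in the first paragraph: because the maximizer is a d-deck rather than an r-deck, the direct one-step argument that works for $m_k$ is unavailable, and one must instead use the deterministic chain structure to transfer the analysis to the terminal r-deck. Once that reduction is made, the two remaining ingredients --- the length bound $t \le 2n - 2k$ and the one-step bound on $\lambda_R$ --- are both immediate from results already established, and the conclusion follows by straightforward combination.
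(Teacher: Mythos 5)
Your proposal is correct and follows essentially the same route as the paper's proof: bound the deterministic steps from the maximizing deck to the r-deck at the end of its chain by $2n-2k-1$ (via Proposition~\ref{prop:longest chain}), then apply the one-step analysis of Theorem~\ref{thm:where next?} at that r-deck with the bounds $M_{k-1}$ and $M_k$ on the two outcomes. The extra details you supply --- why the naive one-step argument fails for a d-deck maximizer, and the use of Remark~\ref{lambda+x} to reduce to the top of the chain --- are sound elaborations of what the paper leaves implicit.
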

\begin{proof}
Let $D$ be a deck in tier $k$ such that $\lambda_D = M_k$.  By Proposition~\ref{prop:longest chain}, from $D$ it is possible to make at most $2n - 2k - 1$ moves before arriving at an r-deck.  From the r-deck at the end of $D$'s chain, we must make one additional (random) move.  By Theorem~\ref{thm:where next?}, following this move we can either go to tier $k-1$ or remain in tier $k$, with probabilities $\dfrac{k}{2n}$ and $\dfrac{2n-k}{2n}$, respectively. If we end up in tier $k- 1$, then the expected number of remaining moves is at most $M_{k - 1}$, while if we end up in tier $k$, it is at most $M_k$.  The result follows immediately.
\end{proof}

We may use the recurrences just derived to give upper and lower bounds on the expected number of moves to absorption for decks in tier $k$.

\begin{theorem}
For decks in tier $k$ where $1\leq k\leq n-1$, we have the lower bound
\begin{equation*}
m_k \geq n^2+2nH_k -\frac{4}{3}n +\frac{3}{4}-\frac{5}{4(4n^2-1)}
\end{equation*}
and the upper bound
\begin{equation*}
M_k \leq 4n^2H_k-3n^2 -4nk+\frac{20}{3}n -\frac{9}{4}-\frac{5}{4(4n^2-1)},
\end{equation*}
where $H_k=\sum_{i=1}^k\frac{1}{i}$ is the $k$-th harmonic number.
\label{maxmink}
\end{theorem}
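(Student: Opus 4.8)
The plan is to convert each one-step recursive inequality from Proposition~\ref{min_k} and Proposition~\ref{prop:max_k} into a clean telescoping recurrence, and then unwind it down to tier $1$, where the exact base values are already known from Corollary~\ref{cor:bestworstt1}. The first move is purely algebraic: in the lower bound
\[
m_k \geq 1 +\frac{k}{2n}m_{k-1}+ \frac{2n-k}{2n}m_k,
\]
I would move the $\frac{2n-k}{2n}m_k$ term to the left, leaving $\frac{k}{2n}m_k$ on that side, and divide through by $\frac{k}{2n}$. This collapses the inequality to the simple form
\[
m_k \geq m_{k-1}+\frac{2n}{k}.
\]
An identical manipulation applied to Proposition~\ref{prop:max_k} yields
\[
M_k \leq M_{k-1}+\frac{4n^2}{k}-4n.
\]

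Next I would telescope. Iterating $m_k \geq m_{k-1}+\frac{2n}{k}$ from tier $1$ upward gives $m_k \geq m_1 + 2n\sum_{i=2}^{k}\frac{1}{i} = m_1 + 2n(H_k-1)$, and likewise $M_k \leq M_1 + 4n^2(H_k-1) - 4n(k-1)$, where in each case I have pulled out the $i=1$ term so the residual sum is exactly $H_k - 1$. The base values come from Corollary~\ref{cor:bestworstt1}: the minimum over tier $1$ is $m_1 = \lambda_{[1,0]} = n^2+\frac{2n}{3}+\frac{3}{4}-\frac{5}{4(4n^2-1)}$, and the maximum is $M_1 = \lambda_{[1,2n-3]} = n^2+\frac{8n}{3}-\frac{9}{4}-\frac{5}{4(4n^2-1)}$. (I should note explicitly that, by Remark~\ref{lambda+x}, $m_k$ is attained on an r-deck, consistent with the structure already exploited in Proposition~\ref{min_k}.)

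The final step is substitution and bookkeeping. For the lower bound, inserting $m_1$ and combining the linear-in-$n$ terms via $\frac{2n}{3}-2n=-\frac{4n}{3}$ produces exactly $n^2+2nH_k-\frac{4}{3}n+\frac{3}{4}-\frac{5}{4(4n^2-1)}$. For the upper bound, inserting $M_1$ and combining $-4n^2+4n^2H_k$, $-4nk+4n$, and $\frac{8n}{3}+4n=\frac{20n}{3}$ yields $4n^2H_k-3n^2-4nk+\frac{20}{3}n-\frac{9}{4}-\frac{5}{4(4n^2-1)}$, as claimed. I do not anticipate a genuine obstacle here: the two recurrences decouple completely and each is first-order once simplified, so the only real care needed is the harmonic-number offset (tracking that the telescoped sum is $H_k-1$ rather than $H_k$, since the $i=1$ contribution is absorbed into the base case) and keeping the constant and $O(1/n^2)$ terms intact through the arithmetic. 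The mild subtlety worth flagging is that these bounds are inherited purely from the tier-$1$ base case and the worst-case recursions, so they need not be tight in any particular higher tier; establishing tightness (or the conjectured $n^2$ order) is a separate matter not addressed by this argument.
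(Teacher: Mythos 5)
Your proposal is correct and follows essentially the same route as the paper's own proof: rearrange the inequalities of Proposition~\ref{min_k} and Proposition~\ref{prop:max_k} into the first-order recurrences $m_k \geq m_{k-1} + \frac{2n}{k}$ and $M_k \leq M_{k-1} + \frac{4n^2}{k} - 4n$, telescope down to tier $1$, and substitute the exact base values $m_1 = \lambda_{[1,0]}$ and $M_1 = \lambda_{[1,2n-3]}$ from Corollary~\ref{cor:bestworstt1}. The arithmetic bookkeeping (including the $H_k - 1$ offset) matches the paper's computation exactly.
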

\begin{proof}
From the recursive formula for $m_k$ in Proposition~\ref{min_k}, we get 
\[
m_k\Bigg(\frac{2n-(2n-k)}{2n}\Bigg) \geq 1 + \frac{k}{2n}m_{k-1}.
\]
Dividing both sides by $\frac{2n-(2n-k)}{2n}$ and iterating, we have
\begin{align*}
m_k &\geq \frac{2n}{k} + m_{k-1}\\
 &\geq \frac{2n}{k} + \frac{2n}{k-1} + m_{k-2}\\
&\vdotswithin{\geq} \\
 &\geq 2n\left(\sum_{i=0}^{k-2} \frac{1}{k-i}\right) + m_1\\
 &=  2n\left(\sum_{i=0}^{k-1} \frac{1}{k-i}\right) -2n +n^2+\frac{2n}{3}+\frac{3}{4}-\frac{5}{4(4n^2-1)}\\
&=n^2+ 2nH_k-\frac{4n}{3}+\frac{3}{4}-\frac{5}{4(4n^2-1)},
\end{align*}
where in the penultimate step we use Corollary~\ref{cor:bestworstt1}.
Similarly, from the recursive formula for $M_k$ in Proposition~\ref{prop:max_k}, we get
\[
M_k\Bigg(\frac{2n-(2n-k)}{2n}\Bigg) \leq 2n-2k + \frac{k}{2n}M_{k-1}.
\]
Dividing and iterating gives
\begin{align*}
M_k &\leq \frac{4n^2}{k} -4n + M_{k-1}\\
 &\leq \frac{4n^2}{k}-4n + \frac{4n^2}{k-1}-4n  + M_{k-2}\\
 &\vdotswithin{\leq} \\
 &\leq 4n^2\left(\sum_{i=0}^{k-2} \frac{1}{k-i}\right) -4n(k-1) + M_1\\
 &= 4n^2H_k-3n^2 -4nk+\frac{20}{3}n -\frac{9}{4}-\frac{5}{4(4n^2-1)},
\end{align*}
as claimed.
\end{proof}

\section{An improved upper bound}
\label{sec:improved max}

In this section, we refine Proposition~\ref{prop:max_k} in order to give an improved upper bound on the quantity $M_k$, the maximum expected number of moves to absorption for a deck in tier $k$.

\begin{theorem} 
Fix a d-deck $D$ in tier $k$ with $k \geq 2$.  Let $R$ be the r-deck at the end of the chain containing $D$, and suppose that it takes $d$ deterministic steps to reach $R$ from $D$.  Suppose further that $D'$ is a deck such that $R$ moves to $D'$ with positive probability, and that it takes $d'$ deterministic steps to reach an r-deck from $D'$.  Then
\[d+d'\leq \begin{cases}
2n-2k-1 & \text{if } D' \text{ is in tier $k$} \\
2n-2k+2 & \text{if } D' \text{ is in tier $k-1$}.
\end{cases}
\]

\label{d+d'}
\end{theorem}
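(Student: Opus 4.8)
The plan is to reformulate everything cyclically. Arrange the $2n$ cards of a deck around a circle; a choice of ``top card'' is then a cut between two cyclically adjacent positions, a deterministic move slides the cut forward by one, and a move is deterministic (resp.\ random) exactly when the adjacency at the cut is \emph{unequal} (resp.\ \emph{equal}, i.e.\ monochromatic). Two facts drive the argument. First, the number of deterministic steps needed to reach an r-deck from a d-deck $X$ equals the length of the maximal alternating prefix of $X$, i.e.\ the distance forward from the cut to the first equal adjacency (an r-deck needs $0$ steps); this quantity, and the whole cyclic picture, is invariant under the color swap of Remark~\ref{01symmetry}, so I may ignore folding except to locate the cut. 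Second, a deck lies in tier $k$ if and only if it has exactly $2k$ equal cyclic adjacencies, and these cut the circle into $2k$ maximal \emph{alternating arcs}; each chain corresponds to one arc, with deterministic length (arc length) $-\,1$. In particular every arc has length at most $2n-2k$, recovering Proposition~\ref{prop:longest chain}.

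Let arc $A$ be the arc of $R$ ending at $R$'s cut, so $A$ carries the chain of $D$ and $d\le |A|-1$, and let arc $B$ be the arc immediately after the cut. Since the chain of $D$ has length at least $2$, the last $0$-block of $R$ is a single card, whence $|A|\ge 2$ and $A$ ends in $\cdots 1 0$. The random move deletes the top $0$ of $R$ and reinserts it. If the first $0$-block of $R$ has length $\ge 2$ (equivalently the second card of $R$ is $0$), then $D'$ again begins and ends in $0$, so $D'$ is an r-deck, $d'=0$, and $d+d'\le |A|-1\le 2n-2k-1$, settling both cases. So assume the first $0$-block is a single $0$; then $D'$ is a d-deck and $d'$ equals the length of the maximal alternating prefix of $D'$, read forward from the second card of $R$.

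I then split on the tier of $D'$. If $D'$ stays in tier $k$, the inserted $0$ does not split an adjacent pair $11$, so it cannot \emph{bridge} the equal adjacency at the end of $B$; the prefix of $D'$ therefore cannot run past $B$, giving $d'\le |B|-1$ and $d+d'\le |A|+|B|-2$. If $D'$ drops to tier $k-1$, the inserted $0$ splits some $11$; the prefix of $D'$ can extend past $B$ only by inserting into the $11$ at $B$'s end, in which case it runs through the whole following arc $C$ and stops at $C$'s end, giving exactly $d'=(|B|-1)+1+|C|=|B|+|C|$ and $d+d'\le |A|+|B|+|C|-1$. In this tier-$(k-1)$ case $A$, $B$, $C$ are three distinct arcs among the $2k\ge 4$ arcs of $R$, and the remaining $2k-3$ arcs each have length $\ge 1$, so $|A|+|B|+|C|\le 2n-(2k-3)$ and $d+d'\le 2n-2k+2$, as claimed.

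The one remaining point, and the crux of the tier-$k$ bound, is to sharpen $|A|+|B|\le 2n-(2k-2)$ to $|A|+|B|\le 2n-2k+1$. Here I would run a color-parity argument: if the other $2k-2$ arcs all had length $1$, then, since consecutive arcs agree in color across their shared equal adjacency and a length-$1$ arc begins and ends in the same color, these $2k-2$ middle arcs would all be a single common color, and $A$ would begin in that color; counting $0$s and $1$s through $A$, $B$, and the middle arcs then forces an imbalance of $2k$ or of $2k-2$ (according to the parity of $|B|$) between the two colors, contradicting that $R$ has $n$ cards of each color whenever $k\ge 1$. Hence some middle arc has length $\ge 2$, giving $|A|+|B|\le 2n-2k+1$ and $d+d'\le 2n-2k-1$. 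I expect the balance bookkeeping of this last step (together with nailing down the exact constant $d'=|B|+|C|$ in the bridging case) to be the main obstacle; the arc reformulation is what makes both tractable.
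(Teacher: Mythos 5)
Your proposal is correct, and it reaches the bound by a genuinely different route than the paper. The paper argues on explicit linear strings: it splits according to the parity of $d$ (which dictates the exact form of the alternating tail of $R$), and within each parity into three sub-cases --- first repeated pair $00$; first pair $11$ with the top card inserted elsewhere; first pair $11$ with the top card inserted into it --- each settled by counting blocks of $1$s in a displayed string. Your necklace picture (decks as cuts, tier $k$ as exactly $2k$ equal cyclic adjacencies, chains as the $2k$ maximal alternating arcs; both identifications follow quickly from the remarks after Definition~\ref{def:tier} and Remark~\ref{01symmetry}) collapses the parity-of-$d$ split entirely, since that split is really the dichotomy of whether the adjacency ending $B$ is $00$ or $11$, i.e.\ the parity of $|B|$, which you treat uniformly; and it isolates the paper's hardest sub-case 2(b) as the unique ``bridging'' insertion, with the exact value $d' = |B| + |C|$. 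What the paper buys is self-containedness: every inequality is read off a displayed string. What you buy is economy and transparency: both final bounds become arc counting ($2k$ arcs of total size $2n$, each of size at least $1$), sharpened once by a color-balance argument. I checked your two flagged worries and both resolve favorably: in the bridging case $d' = (|B|-1)+1+|C|$ exactly, and with all $2k-2$ middle arcs of length $1$ the imbalance is $2k$ when $|B|$ is odd and $2k-2$ when $|B|$ is even. (For $|A| \le 2n-2k$ you may simply cite Proposition~\ref{prop:longest chain}, which precedes this theorem, since chain length equals arc length; raw arc-counting alone gives only $2n-2k+1$, and re-deriving the proposition requires the same balance argument.)

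Two small repairs are needed. First, when the first $0$-block of $R$ is a single card, $D'$ need not be a d-deck: the top card may be reinserted at the top, giving $D' = R$ with $d' = 0$; this case is harmless, but your sentence as written is false without the exception. Second, ``whenever $k \ge 1$'' is a slip: in the even-$|B|$ case the imbalance is $2k-2$, which contradicts balancedness only for $k \ge 2$. This is not a removable blemish of your argument, because the tier-$k$ bound is genuinely false at $k = 1$: for $n = 3$, the chain $D = 001011 \to R = 010110$ followed by the insertion giving $D' = 010011$ has $d + d' = 1 + 3 = 4 > 3 = 2n - 2k - 1$. So the hypothesis $k \ge 2$ must be invoked exactly where you invoke it, and the justification should say $k \ge 2$.
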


\begin{proof}
To prove this, we consider several cases and sub-cases.  
First, suppose that $d$ is odd.  In this case, $R$ has the form $0\boxed{\cdots}1\underbrace{0101\cdots 10}_{d}$, where the first (boxed) ellipsis denotes a string of length $2n - d - 2$, containing $n - \frac{d + 3}{2}$ copies of $0$ and $n - \frac{n + 1}{2}$ copies of $1$.  If the first entry of this string is $0$, then (regardless of what random step we take from $R$), the deck $D'$ begins and ends with $0$.  In other words, in this case $D'$ is an r-deck, so $d' = 0$ and the result follows by Proposition~\ref{prop:longest chain}.  So when $d$ is odd, we may assume that in fact $R = 01\boxed{\cdots}1 \, 0101\cdots 10$.  We now consider two cases, depending on whether the first consecutive repetition in $R$ is $00$ or $11$. 
        \begin{itemize}
            \item Case 1: We have $$R = 01\boxed{0101\cdots0100\cdots }1\underbrace{0101\cdots 10}_{d},$$
            with a repeating alternating string $0101\cdots$ of even length preceding the first $00$.  Let the length of this alternating string be $2j$, so that 
            $$R = \underbrace{010101\cdots01}_{2j}\boxed{00\cdots 1}\underbrace{0101\cdots 10}_{d}.$$
            Then $d' \leq 2j$, with equality when our random move inserts the top card no earlier in the deck than the first repeated $00$.  In the expression for $R$ above, the number of blocks of $1$s in the boxed $\boxed{00\cdots 1}$ string must be  $n-k-\dfrac{d-1}{2}-j$ (because $R$ has $n - k$ blocks of $1$'s, being in tier $k$, and $\frac{d - 1}{2} + j$ of these are accounted for in the unboxed region).  Since the boxed region contains at least one $1$, it follows that 
                \[
                j\leq n-k-\dfrac{d+1}{2}
                \] 
                and therefore that $d' \leq 2n - 2k - d - 1 $. 
            
            \item Case 2(a): We have $$R = 01\boxed{0101\cdots 011 \cdots}1\underbrace{0101\cdots 10}_{d},$$ and the top card is \emph{not} inserted between the first pair of consecutive $11$ cards.  In this case, we can write 
            $$R = \underbrace{010101\cdots0}_{2j+1}\boxed{11\cdots 1}\underbrace{0101\cdots 10}_{d}$$
            for some $j \geq 0$, and we have $d' \leq 2j+1$ (with equality if the random move places the leading $0$ after the first pair of $1$s in $R$).  Now, in the boxed region $\boxed{11\cdots 1}$, the number of blocks of $1$s is $n-k-\dfrac{d-1}{2}-j$ (because $R$ has $n - k$ blocks of $1$s, of which $j + \frac{d - 1}{2}$ are already accounted for). Also, the boxed region contains at least two blocks of $1$s (since $k\geq 2$), and therefore 
                 \[j\leq n-k-\dfrac{d+3}{2}.\] Thus $d' \leq 2n - 2k - d - 2$.
            
            \item Case 2(b):  We have $$R = 01\boxed{0101\cdots 011 \cdots}1\underbrace{0101\cdots 10}_{d},$$ and the top card \emph{is} inserted between the first pair of consecutive $11$ cards.  In this case, we can write 
            $$R=\underbrace{010101\cdots0}_{2j+1}\boxed{11 \cdots1}\underbrace{0101\cdots 10}_{d}$$ for some $j \geq 0$.  We denote by $m$ the length of the maximal $0101\cdots$ alternating string that begins immediately following the first consecutive $11$ pair, so that
            $$R = \begin{cases} \underbrace{010101\cdots0}_{2j+1}\boxed{11\boxed{\underbrace{0101\cdots01}_{m}}1\cdots 1}\underbrace{0101\cdots 10}_{d}, & m \text{ even}, \\[18pt]
            \underbrace{010101\cdots0}_{2j+1}\boxed{11\boxed{\underbrace{0101\cdots0}_{m}}0\cdots 1}\underbrace{0101\cdots 10}_{d}, & m \text{ odd}.
            \end{cases}
     $$
            Since in this case we place the leading $0$ between the first consecutive $11$ pair, after the random move we reach the d-deck
            \begin{equation}\label{eq:D' for d odd case 3}
            D' = \underbrace{010101\cdots1}_{2j}010\underbrace{1010\cdots a}_{m}\boxed{a\cdots0}\underbrace{1010\cdots 01}_{d}
            \end{equation}
            where $a \in \{0, 1\}$, and we have $d'=2j+3+m$ deterministic steps to make before reaching another r-deck.  Observe that if $m$ is even, the boxed string in \eqref{eq:D' for d odd case 3} must contain at least one copy of $1$ (or else the deck has more $0$s than $1$s).  The deck $D'$ is in tier $\kappa = k-1$, so there are $n-\kappa$ blocks of $1$s.  Comparing this number to the number of blocks of $1$s forced in \eqref{eq:D' for d odd case 3}, we have the inequality
            \[
      n-\kappa\geq \begin{cases}
              j+1+\frac{m}{2}+ 1 +\frac{d+1}{2}, & m \text{ even} \\
              j+1+\frac{m + 1}{2}+\frac{d+1}{2}, & m \text{ odd}.
              \end{cases}
            \]
            Multiplying through by $2$, this gives
      \begin{align*}
          2n-2\kappa &\geq 2j+ m+ d + 4 \\
            & = d + d' + 1.
      \end{align*}
\end{itemize}

In these cases, we get $d + d' \leq 2n - 2k - 1$ every time we end up in tier $k$, and in the worst case (case 2(b)) $d + d' \leq 2n - 2\kappa - 1 = 2n - 2k + 1$ when we end up in tier $k - 1$.
This completes the argument in the case $d$ is odd.

The case that $d$ is even is very similar.  In this case, if the random deck $R$ begins with $00 \cdots$, then the result is again straightforward by Proposition~\ref{prop:longest chain}.  Otherwise, the random deck $R$ has the form $R = 01\boxed{\cdots}0\underbrace{1010\cdots 10}_{d}$, and we again consider cases depending on whether the first repeated consecutive entries in $R$ are $00$ or $11$.
          \begin{itemize}
          \item Case 1: We have $$R = 01\boxed{0101\cdots0100\cdots }0\underbrace{1010\cdots 10}_{d},$$
          with an alternating $0101\cdots$ string of even length preceding the first $00$. Let the length of this string be $2j$, so that
          $$
            R= \underbrace{010101\cdots01}_{2j}\boxed{00\cdots 0}\underbrace{1010\cdots 10}_{d}.
          $$
          Then $d'\leq 2j$. In the boxed string $\boxed{00\cdots 0}$, the number of blocks of $1$s must be $n-k-\dfrac{d}{2}-j$. This quantity must be positive (or else the deck would have more $0$s than $1$s), so we get that $$j \leq n-k-\dfrac{d}{2} - 1$$ and therefore that $d'\leq 2n-2k-d - 2$.
            \item Case 2(a): We have $$R=01\boxed{0101\cdots 01011\cdots \cdots}0\underbrace{1010\cdots 10}_{d},$$
            where the top card is \textit{not} inserted between the first pair of consecutive $1$s. In this case, we can write
            $$R=\underbrace{010101\cdots10}_{2j+1}\boxed{11\cdots 0}\underbrace{1010\cdots 10}_{d}$$
            for some $j\geq 0$, and $d'\leq2j+1$. In the boxed string $\boxed{11\cdots 0}$, the number of blocks of $1$s is $n-k-\dfrac{d}{2}-j$ (because $R$ has $n-k$ blocks of $1$s, of which $j+\dfrac{d}{2}$ are accounted for). Since the boxed string contains at least one block of $1$s, it follows that $$j\leq n-k-\dfrac{d}{2}-1,$$ and thus $d'\leq 2n-2k-d-1.$
            \item Case 2(b): We have
            $$R=01\boxed{0101\cdots011\cdots}1010\cdots10,$$
            and the top card \textit{is} inserted between the first $11$ pair. As in the case when $d$ was odd, we denote by $m$ the length of the maximal $0101\cdots$ alternating string following the consecutive $11$ pair, so that
  \[
  R = \begin{cases} \underbrace{010101\cdots0}_{2j+1}\boxed{11\boxed{\underbrace{0101\cdots01}_{m}}1\cdots 0}\underbrace{1010\cdots 10}_{d}, & m \text{ even}, \\[18pt]
            \underbrace{010101\cdots0}_{2j+1}\boxed{11\boxed{\underbrace{0101\cdots0}_{m}}0\cdots 0}\underbrace{1010\cdots 10}_{d}, & m \text{ odd},
            \end{cases}
  \]
      for some $j\geq 0$, so that $d'= 2j+3+m$. The d-deck reached by the random move has the form 
           \begin{equation} \label{eq:D' for d even case 3}
           D' =  \underbrace{0101\cdots1}_{2j}010\underbrace{1010\cdots a}_{m}\boxed{a\cdots1}\underbrace{0101\cdots 01}_{d}.
           \end{equation}
           The deck $D'$ is in tier $\kappa=k-1$, so there are $n-\kappa$ blocks of $1$s.  Comparing this number to the number of blocks of $1$s forced in \eqref{eq:D' for d even case 3}, we have the inequality
           \[
n - \kappa \geq \begin{cases}
j+1+\frac{m}{2} + 1 + \frac{d}{2}, & m \text{ even}, \\
j+1+\frac{m + 1}{2} + \frac{d}{2}, & m \text{ odd}.
\end{cases}
\]
            Multiplying through by $2$, this gives
      \begin{align*}
        2n-2\kappa &\geq 2j+3 + m + d \\
              & = d + d'
      \end{align*}
in both cases.
       \end{itemize}

In these cases, we get $d + d' \leq 2n - 2k - 1$ every time we end up in tier $k$, and $d + d' \leq 2n - 2\kappa = 2n - 2k + 2$ when we end up in tier $k - 1$. This completes the proof.
\end{proof}

We now show how to use this result to give better bounds on $M_k$.  We begin with tier $2$; according to Theorem~\ref{maxmink}, we have $M_2 \leq 4n^2 H_2 - 3n^2 + O(n) = 3n^2 + O(n)$.  The next result shows that the leading coefficient can be reduced to $2$.

\begin{prop}
\label{prop:M2r}
We have $\displaystyle \Mr_2 \leq 2n^2+\frac{n}{6}+\frac{3}{4}+\frac{6n-17}{4(4n^2-1)}$.
\end{prop}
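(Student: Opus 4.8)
The plan is to bound $\Mr_2$ by a self-referential recurrence obtained by looking \emph{two} random moves ahead from the worst r-deck of tier $2$, using Theorem~\ref{d+d'} to control the deterministic chains traversed in between. Let $R$ be an r-deck in tier $2$ with $\lambda_R=\Mr_2$. By Theorem~\ref{thm:where next?} a single random move from \emph{any} tier-$2$ r-deck descends to tier $1$ with probability $q=\tfrac1n$ and stays in tier $2$ with probability $p=\tfrac{2n-2}{2n}=\tfrac{n-1}{n}$; because these probabilities are identical at every r-deck, conditioning on the next two random moves is clean. This gives three cases: (i) the first move exits to tier $1$ (probability $q$); (ii) the first move stays in tier $2$, reaching a new r-deck after $c'$ deterministic steps, after which the second move exits (probability $pq$); (iii) both moves stay in tier $2$, reaching r-decks after $c'$ and then $c''$ deterministic steps (probability $p^2$). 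By Remark~\ref{lambda+x} each deterministic stretch contributes exactly its length, and since the r-deck reached in case (iii) has $\lambda\le\Mr_2$, the recurrence closes on $\Mr_2$ with self-coefficient $p^2$.

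The crux is bounding the chain lengths, and this is exactly where Theorem~\ref{d+d'} beats the cruder Proposition~\ref{prop:max_k}. In case (iii), applying Theorem~\ref{d+d'} at the intermediate r-deck (incoming chain $c'$, outgoing destination again in tier $2$) gives $c'+c''\le 2n-5$: the naive bound would let each of $c',c''$ reach $2n-5$ separately, paying nearly twice as much, so forcing two consecutive chains to share one budget of $2n-5$ is precisely what lowers the leading coefficient from $3$ to $2$. For the exits I would use the explicit tier-$1$ data: in case (i) I bound the entire post-exit cost by $M_1=\lambda_{[1,2n-3]}$ (from Corollary~\ref{cor:bestworstt1}) directly, while in case (ii) I combine the pre-exit chain $c'$ with the tier-$1$ chain produced by the exit using the tier-$(k-1)$ bound $c'+(\text{exit chain})\le 2n-2$ of Theorem~\ref{d+d'}, leaving $\Mr_1=\lambda_{[n-1,0]}$ (the maximal tier-$1$ r-deck, by the increasing-sequence computation in Corollary~\ref{cor:bestworstt1}) for the r-deck ultimately reached. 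This assembles into
\[
\Mr_2 \;\le\; q\,(1+M_1)\;+\;pq\,\big(2+(2n-2)+\Mr_1\big)\;+\;p^2\,\big(2+(2n-5)+\Mr_2\big).
\]

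To finish I would move $p^2\Mr_2$ to the left and divide by $1-p^2=\tfrac{2n-1}{n^2}$. The factor $\tfrac{n^2}{2n-1}\sim\tfrac n2$ multiplying the $O(n)$ pure-chain terms manufactures one copy of $n^2$, while the tier-$1$ inputs $M_1$ and $\Mr_1$ (each of order $n^2$) supply the second copy; tracking the linear parts, the $-2n$ coming from the chain terms combines with the $+\tfrac{13n}{6}$ coming from $\tfrac{nM_1+(n-1)\Mr_1}{2n-1}$ to produce exactly $\tfrac n6$, in agreement with the claimed $2n^2+\tfrac n6+\cdots$.

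I expect the only real difficulty to be constant bookkeeping, not any missing idea. Matching the stated lower-order terms $\tfrac n6+\tfrac34+\tfrac{6n-17}{4(4n^2-1)}$ requires carrying the full rational forms of $M_1$ and $\Mr_1$ from Theorem~\ref{thm:stepsrdeckst1} and Corollary~\ref{cor:bestworstt1} (including their $\tfrac{1}{4n^2-1}$ corrections) exactly through the division, so keeping $p,q$ as the exact fractions throughout; premature rounding would spoil the linear and constant coefficients, and indeed choosing the \emph{wrong} refinement in an exit case (for instance splitting case (i) as $\Mr_1+(2n-3)$ instead of using $M_1$, or not invoking the $2n-2$ pairing in case (ii)) changes the linear term away from $\tfrac n6$. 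A last point to check is the degenerate situation where an intervening chain has length $0$, so a random move lands directly on an r-deck and Theorem~\ref{d+d'} has no incoming d-deck to reference; there the inequalities $c'+c''\le2n-5$ and the exit bounds still hold because each single chain is already bounded via Proposition~\ref{prop:longest chain}.
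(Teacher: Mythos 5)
Your proposal is correct and follows essentially the same route as the paper's proof: a two-step lookahead from the worst tier-$2$ r-deck, bounding the paired deterministic chains by $2n-5$ (stay in tier $2$) and $2n-2$ (exit to tier $1$) via Theorem~\ref{d+d'}, charging $M_1$ after a first-move exit and $\Mr_1$ after a second-move exit, and closing the recurrence on $\Mr_2$ with self-coefficient $\left(\tfrac{n-1}{n}\right)^2$. Your arithmetic checkpoints are also accurate: after dividing by $1-p^2=\tfrac{2n-1}{n^2}$, the pure-chain terms contribute $n^2-2n+\tfrac52-\tfrac{1}{2(2n-1)}$ and the tier-$1$ inputs contribute $n^2+\tfrac{13n}{6}-\tfrac74+\tfrac{5(2n-3)}{4(4n^2-1)}$, which sum exactly to the claimed bound.
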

\begin{proof}
Choose an r-deck $R$ in tier $2$.  After one move, it lands in tier $1$ with probability $\frac{2}{n}$; in this case, by Corollary~\ref{cor:bestworstt1}, it requires on average at most $M_1 = n^2 + \frac{8n}{3} - \frac{9}{4} - \frac{5}{4(4n^2 - 1)}$ moves to reach the absorbing state.  Alternatively, it could end up at some deck $D$ in tier $2$.  From this deck $D$ there will be some number $d = d(D)$ of deterministic steps to reach an r-deck, followed by a random step (resulting in either a deck in tier $1$ or tier $2$), followed by some more deterministic steps.  By considering the worst-case option for this final r-deck, and applying Theorem~\ref{d+d'}, we get that
\begin{align*}
\Mr_2 &\leq 1+\frac{2}{2n} M_1 +\frac{2n-2}{2n}\max_{d, d', d''}\left( 1 + d+\frac{2}{2n}\left(d'+\Mr_1\right)+\frac{2n-2}{2n}\left(d''+\Mr_2\right)\right) \\
&=1+\frac{M_1}{n}+\frac{n-1}{n}\max_{d, d', d''}\left(1 + \frac{1}{n}(d + d')+\frac{n-1}{n}(d + d'') +\frac{\Mr_1}{n}+\frac{n-1}{n}\Mr_2\right) \\
& \leq 1+\frac{M_1}{n} + \frac{n-1}{n}\left( 1 + \frac{1}{n}(2n-2) +\frac{n-1}{n}(2n - 5)+\frac{\Mr_1}{n}+\frac{n-1}{n}\Mr_2\right).
\end{align*}
Collecting the $\Mr_2$ terms, this becomes
\[
\left(1-\left(\frac{n-1}{n}\right)^2\right)\Mr_2 \leq 1+\frac{M_1}{n} + \frac{n-1}{n}\left( 2n - 4 + \frac{3}{n} +\frac{\Mr_1}{n}\right).
\]
Finally, substituting for the values of $M_1$ and $\Mr_1 = \lambda_{[n - 1, 0]}$ (from Theorem~\ref{thm:stepsrdeckst1}), we get
\[
\Mr_2 \leq 2n^2+\frac{n}{6}+\frac{3}{4}+\frac{6n-17}{4(4n^2-1)},
\]
as claimed.
\end{proof}

\begin{theorem}
\label{thm:refined recursion}
For all $k \geq 3$, we have
\[
M_k^\mathrm{r}\leq \frac{8n^3+4n^2-8kn^2+10nk-3k}{4nk-k^2}+\frac{4n-2k+1}{4n-k}M_{k-1}^\mathrm{r}+\frac{k-1}{4n-k}M_{k-2}^\mathrm{r}.
\]
\end{theorem}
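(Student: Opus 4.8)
The plan is to imitate the structure of the proof of Proposition~\ref{prop:M2r}, but to carry out the recursion so as to express $M_k^{\mathrm{r}}$ in terms of \emph{both} $M_{k-1}^{\mathrm{r}}$ and $M_{k-2}^{\mathrm{r}}$, which is why the conclusion is a two-term recurrence. First I would fix an r-deck $R$ in tier $k$ realizing (or bounding toward) the maximum $M_k^{\mathrm{r}}$, and condition on the first random move out of $R$. By Theorem~\ref{thm:where next?}, with probability $\frac{k}{2n}$ the move descends to tier $k-1$, and with probability $\frac{2n-k}{2n}$ it stays in tier $k$. In the descending case, the resulting deck is a (possibly d-)deck in tier $k-1$, from which we incur some deterministic steps $d'$ before reaching the next r-deck, and then at worst $M_{k-1}^{\mathrm{r}}$ more moves. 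In the staying case, we land at a d-deck $D$ in tier $k$ with some deterministic run $d$ to its chain-end r-deck, followed by another random move that itself either descends to tier $k-1$ (with probability $\frac{k}{2n}$, costing at most $M_{k-1}^{\mathrm{r}}$ plus a deterministic tail) or stays in tier $k$ (costing at most $M_k^{\mathrm{r}}$ plus a tail). This is the step where the second-order term arises.

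The key device is Theorem~\ref{d+d'}: the sum $d+d'$ of the two consecutive deterministic runs (one feeding into a random move, one following it) is bounded by $2n-2k-1$ when we remain in tier $k$ and by $2n-2k+2$ when we drop a tier. I would apply this bound to replace the awkward maximized quantities $\max_{d,d',d''}(d+d')$, $\max(d+d'')$, etc., by explicit linear-in-$n$ constants, exactly as was done in the final inequality of the proof of Proposition~\ref{prop:M2r}. Substituting the tier-$k$ transition probabilities $\frac{k}{2n}$ and $\frac{2n-k}{2n}$ (in place of the tier-$2$ values $\frac{1}{n}$ and $\frac{n-1}{n}$) and tracking the deterministic contributions carefully should yield, after collecting terms, an inequality of the shape
\[
\Bigl(1-\bigl(\tfrac{2n-k}{2n}\bigr)^2\Bigr)M_k^{\mathrm{r}} \leq (\text{affine in }n,k) + (\text{coeff})\,M_{k-1}^{\mathrm{r}} + (\text{coeff})\,M_{k-2}^{\mathrm{r}}.
\]
Dividing through by the coefficient $1-\bigl(\frac{2n-k}{2n}\bigr)^2 = \frac{k(4n-k)}{4n^2}$ is what produces the denominator $4nk-k^2$ in the claimed bound, the coefficients $\frac{4n-2k+1}{4n-k}$ and $\frac{k-1}{4n-k}$, and the constant term $\frac{8n^3+4n^2-8kn^2+10nk-3k}{4nk-k^2}$.

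The hard part will be the careful bookkeeping in the staying case: when the second random move itself descends to tier $k-1$, the subsequent deterministic tail leads eventually to tier $k-1$ and then to the r-deck whose expected cost is $M_{k-1}^{\mathrm{r}}$, but the deterministic run lengths from \emph{two} successive random moves must be combined, and it is here that one must be scrupulous about which pairs of consecutive runs the $d+d'$ bound of Theorem~\ref{d+d'} applies to, and about the extra ``$+1$'' cost of each random move. I expect the constraint $k\geq 3$ to enter precisely because Theorem~\ref{d+d'} requires $k\geq 2$ applied at tier $k-1$, so that the intermediate tier still supports the two-block argument used in its Case~2(b); the small-$k$ base cases $k=1,2$ are handled separately by Corollary~\ref{cor:bestworstt1} and Proposition~\ref{prop:M2r}. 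Once the inequality is assembled, reducing it to the stated closed form is a routine (if tedious) algebraic simplification, analogous to the final substitution step in Proposition~\ref{prop:M2r}.
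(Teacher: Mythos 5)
Your overall strategy (condition on successive random moves, use Theorem~\ref{thm:where next?} for the transition probabilities, and use Theorem~\ref{d+d'} to control consecutive deterministic runs, in imitation of Proposition~\ref{prop:M2r}) is the same as the paper's, but your decomposition is asymmetric in a way that cannot yield the stated inequality. You run the two-step analysis only in the \emph{staying} branch, while in the \emph{descending} branch you stop after one deterministic run and bound the rest by $M_{k-1}^{\mathrm{r}}$. Since the staying branch's second random move only involves tiers $k$ and $k-1$, no $M_{k-2}^{\mathrm{r}}$ term can ever appear in your recurrence. Concretely, under your scheme the total coefficient of $M_{k-1}^{\mathrm{r}}$ on the right-hand side is $\frac{k}{2n}+\frac{2n-k}{2n}\cdot\frac{k}{2n}=\frac{k(4n-k)}{4n^2}$, which is exactly the prefactor $1-\left(\frac{2n-k}{2n}\right)^2=\frac{k(4n-k)}{4n^2}$ that you divide by at the end; so you obtain $M_k^{\mathrm{r}}\leq(\text{explicit constant})+1\cdot M_{k-1}^{\mathrm{r}}$, i.e., coefficient $1$ on $M_{k-1}^{\mathrm{r}}$ and $0$ on $M_{k-2}^{\mathrm{r}}$, rather than the claimed $\frac{4n-2k+1}{4n-k}$ and $\frac{k-1}{4n-k}$. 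Since those two claimed coefficients sum to $1$, the theorem's bound is genuinely stronger than yours (its right-hand side is smaller whenever $M_{k-2}^{\mathrm{r}}<M_{k-1}^{\mathrm{r}}$), so your inequality does not imply the statement; your second paragraph's assertion that your setup produces the claimed coefficients is where the argument breaks.

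The paper's proof performs the two-step expansion in \emph{both} branches: after descending to tier $k-1$, it runs $d_1$ deterministic steps to the next r-deck and conditions on the next random move, which stays in tier $k-1$ with probability $\frac{2n-k+1}{2n}$ (contributing $\frac{k(2n-k+1)}{4n^2}M_{k-1}^{\mathrm{r}}$, which combines with the staying branch to give the coefficient $\frac{k(4n-2k+1)}{4n^2}$) or drops to tier $k-2$ with probability $\frac{k-1}{2n}$ (contributing $\frac{k(k-1)}{4n^2}M_{k-2}^{\mathrm{r}}$), with Theorem~\ref{d+d'} applied \emph{at tier $k-1$} to get $d_1+d_1'\leq 2n-2k+1$ and $d_1+d_1''\leq 2n-2k+4$. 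That application is also the only place the hypothesis $k\geq 3$ is actually needed; you correctly guess this explanation, but it is inconsistent with your own decomposition, in which Theorem~\ref{d+d'} is never invoked at tier $k-1$ (your descending branch needs only Proposition~\ref{prop:longest chain}, so nothing in your argument would force $k\geq 3$). The fix is to expand the descending branch through one more random move exactly as you did for the staying branch, and then carry out the same collection of terms.
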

\begin{proof}
We take the same approach as in the proof of Proposition~\ref{prop:M2r}.  Fix an r-deck, $R$, in tier $k$, and consider what happens to the deck in the random process. First, we make a random move. With probability $k/2n$ we go to a deck in tier $k-1$, and with probability $(2n-k)/2n$ we go to a deck in tier $k$. If we go to a deck in tier $k-1$ we make some number $d_1$ of deterministic steps before reaching another r-deck, from which we take one random move; with probability $(k - 1)/2n$ we land in tier $k-2$ and take some number $d_1''$ of deterministic steps to arrive at an r-deck, from which at most $M_{k-2}^\mathrm{r}$ moves are required (in expectation).  Alternatively, we go to a deck in tier $k-1$ with probability $(2n-k+1)/2n$, from which we take some number $d_1'$ of deterministic steps to arrive at an r-deck, from which at most $M_{k-1}^\mathrm{r}$ moves are required (in expectation). If instead, from $R$, we go to tier $k$ again, similar reasoning follows for this two-step recurrence, however here we have $d_0$, $d_0'$ and $d_0''$ indicating the deterministic steps taken at different stages. This gives us the following inequality:
\begin{multline*}
M_{k}^{\mathrm{r}} \leq 1 +\frac{2n-k}{2n}\max_{d_0}\left(d_0+1+\frac{2n-k}{2n}\left(d_0'+M_k^\mathrm{r}\right)+\frac{k}{2n}\left(d_0''+M_{k-1}^\mathrm{r}\right)\right)+\\
\frac{k}{2n}\max_{d_1}\left(d_1+1+\frac{2n-k+1}{2n}\left(d_1'+M_{k-1}^\mathrm{r}\right)+\frac{k-1}{2n}\left(d_1''+M_{k-2}^\mathrm{r}\right)\right).
\end{multline*}
By Theorem~\ref{d+d'} we know that $d_0'\leq 2n-2k-1-d_0$, $d_0''\leq 2n-2k+2-d_0$, $d_1'\leq 2n-2k+1-d_1$, and $d_1''\leq 2n-2k+4-d_1$. Making these replacements gives us
%
%
%
\begin{multline*}
M_{k}^{\mathrm{r}} \leq 1 +\frac{2n-k}{2n}\left(1+\frac{2n-k}{2n}\left(2n-2k-1+M_k^\mathrm{r}\right)+\frac{k}{2n}\left(2n-2k+2+M_{k-1}^\mathrm{r}\right)\right)+\\ \frac{k}{2n}\left(1+\frac{2n-k+1}{2n}\left(2n-2k+1+M_{k-1}^\mathrm{r}\right)+\frac{k-1}{2n}\left(2n-2k+4+M_{k-2}^\mathrm{r}\right)\right).
\end{multline*}
After collecting the terms involving $\Mr_k$ on the left and the terms not involving $\Mr_j$ for any $j$ on the right, this becomes
\[
\frac{4nk-k^2}{4n^2} \cdot M_{k}^{\mathrm{r}} \leq \frac{8n^3+4n^2-8kn^2+10nk-3k}{4n^2}+\frac{k(4n-2k+1)}{4n^2}M_{k-1}^\mathrm{r}+\frac{k(k-1)}{4n^2}M_{k-2}^\mathrm{r}.
\]
Dividing through by the prefactor on the left gives the result.
\end{proof}

\begin{theorem}
\label{thm:maxMr}
For any positive integer $k < n$, we have the upper bound 
\[
\Mr_k \leq 2n^2 H_k - n^2 - n(k - 8/3)
\]
where $H_k$ is the $k$-th harmonic number.
\end{theorem}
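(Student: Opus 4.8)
The plan is to prove the bound by strong induction on $k$, using the recursive inequality of Theorem~\ref{thm:refined recursion} for the inductive step. Write $f(k) := 2n^2 H_k - n^2 - nk + \tfrac{8n}{3}$ for the claimed upper bound, so that the goal is to show $\Mr_k \leq f(k)$ for all $1 \leq k \leq n-1$. First I would dispose of the two base cases $k = 1$ and $k = 2$, which lie outside the range $k \geq 3$ of the recursion. For $k=1$ we have $f(1) = n^2 + \tfrac{5n}{3}$, and the exact value $\Mr_1 = \lambda_{[n-1,0]}$ from Theorem~\ref{thm:stepsrdeckst1} equals this minus a positive quantity, so the bound holds. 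For $k = 2$ we have $f(2) = 2n^2 + \tfrac{2n}{3}$, and the estimate of Proposition~\ref{prop:M2r} differs from $f(2)$ by a quantity that is negative once $n \geq 3$; since $k < n$ forces $n \geq 3$ when $k = 2$, this base case holds as well.

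For the inductive step, fix $k \geq 3$ and assume $\Mr_{k-1} \leq f(k-1)$ and $\Mr_{k-2} \leq f(k-2)$. Abbreviate the three coefficients in Theorem~\ref{thm:refined recursion} as
\[
A = \frac{8n^3+4n^2-8kn^2+10nk-3k}{4nk-k^2}, \qquad B = \frac{4n-2k+1}{4n-k}, \qquad C = \frac{k-1}{4n-k},
\]
so that the recursion reads $\Mr_k \leq A + B\,\Mr_{k-1} + C\,\Mr_{k-2}$. The key structural observation that makes the computation tractable is that $B + C = 1$: the numerators $4n - 2k + 1$ and $k - 1$ sum to the common denominator $4n - k$. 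Consequently $B f(k-1) + C f(k-2)$ is a convex combination of $f(k-1)$ and $f(k-2)$, and can be rewritten as $f(k-1) - C\bigl(f(k-1) - f(k-2)\bigr)$. Since $f(k-1) - f(k-2) = \tfrac{2n^2}{k-1} - n$ and the target increment is $f(k) - f(k-1) = \tfrac{2n^2}{k} - n$, proving $A + B f(k-1) + C f(k-2) \leq f(k)$ reduces, after subtracting $f(k-1)$ from both sides, to the single inequality
\[
A \leq \frac{2n^2}{k} - n + C\left(\frac{2n^2}{k-1} - n\right).
\]

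Clearing the positive denominators $k$ and $4n - k$ (both positive since $k < n$) turns this into a polynomial inequality; after cancellation it collapses to the compact statement $3k(3n-1) \leq 4n^2(k-1)$. I would finish by verifying this for all $3 \leq k \leq n - 1$: dividing by $k$ and using $1 - \tfrac1k \geq \tfrac23$ reduces it to $27n - 9 \leq 8n^2$, i.e.\ $8(n-3)(n-\tfrac38) \geq 0$, which holds for all $n \geq 4$ (and $k \geq 3$ with $k < n$ forces $n \geq 4$). I expect the main obstacle to be purely the algebraic bookkeeping in the inductive step: combining the recursion with the two inductive hypotheses and simplifying the resulting rational expression. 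The observation $B + C = 1$ is what keeps this manageable, since it lets the harmonic-number differences telescope cleanly rather than leaving an unwieldy combination of $H_{k-1}$ and $H_{k-2}$; once that is in hand, everything reduces to the elementary quadratic inequality above.
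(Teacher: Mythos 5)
Your proposal is correct and takes essentially the same route as the paper's proof: the same base cases ($k=1$ from Theorem~\ref{thm:stepsrdeckst1}, $k=2$ from Proposition~\ref{prop:M2r}) and the same induction through Theorem~\ref{thm:refined recursion}, with your reduced inequality $3k(3n-1)\leq 4n^2(k-1)$ being exactly the paper's final positivity claim for $4kn^2-9kn-4n^2+3k$. The only cosmetic differences are that you make the convexity observation $B+C=1$ explicit (the paper uses it implicitly when collecting terms) and that you verify the final inequality via $1-\tfrac{1}{k}\geq\tfrac{2}{3}$ and a quadratic in $n$, where the paper instead rewrites the numerator as a manifestly nonnegative combination.
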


\begin{proof}
We proceed by induction, using the recurrence in Theorem~\ref{thm:refined recursion}.  We begin by checking the base cases $k = 1, 2$. Using Theorem~\ref{thm:stepsrdeckst1}, it is easy to see for $k = 1$ that
\[M_1^\mathrm{r}=n^2+\frac{5n}{3}-\frac{7}{4}+\frac{5(4n-3)}{4(4n^2-1)}\leq n^2 +\frac{5n}{3}\]
for $n \geq 2$.  Similarly, for $k = 2$, by Proposition~\ref{prop:M2r}, we must check
\[M_2^\mathrm{r} \leq 2n^2+\frac{n}{6}+\frac{3}{4}+\frac{6n-17}{4(4n^2-1)}\leq 2n^2+\frac{2n}{3}\] 
for $n \geq 3$. After algebraic simplification, the inequality becomes
\[\frac{n(2n+1)(n-2)+5}{4n^2-1}\geq 0,\] which is clearly valid for $n\geq 3$.  We now proceed to the inductive step: assuming for some fixed $k \geq 3$ that $M_{k-1}^\mathrm{r}\leq 2n^2H_{k-1}-n^2-n((k-1)-8/3)$ and $M_{k-2}^\mathrm{r}\leq 2n^2H_{k-2}-n^2-n((k-2)-8/3)$, we aim to deduce that $\Mr_k \leq 2n^2 H_k - n^2 - n(k - 8/3)$. By  Theorem~\ref{thm:refined recursion}, we have
\begin{align*}
M_k^\mathrm{r} & \leq \dfrac{8n^3+4n^2-8kn^2+10nk-3k}{4nk-k^2} + \frac{4n-2k+1}{4n-k}M_{k-1}^\mathrm{r}+\frac{k-1}{4n-k}M_{k-2}^\mathrm{r} \\
&\leq  \dfrac{8n^3+4n^2-8kn^2+10nk-3k}{4nk-k^2} + \frac{4n-2k+1}{4n-k}\left(2n^2H_{k-1}-n^2-n((k-1)-8/3)\right) + {} \\& \qquad +\frac{k-1}{4n-k}\left(2n^2H_{k-2}-n^2-n((k-2)-8/3)\right)\\
& =\dfrac{8n^3+4n^2-8kn^2+10nk-3k}{4nk-k^2}+\frac{4n-2k+1}{4n-k}\left(\frac{2n^2}{k-1} + n\right) + \frac{k-1}{4n-k}\left(2n\right) + {} \\& \qquad + 2n^2H_{k-2} - n^2 - n(k - 8/3).
\end{align*}
So it would suffice to show that the above expression is less than or equal to $2n^2H_k-n^2-n(k-8/3)$, or, equivalently, that their difference is positive. After algebraic simplification, the expression of their difference becomes
\begin{multline*}
2n^2\left(\frac{1}{k}+\frac{1}{k-1}\right) - \dfrac{8n^3+4n^2-8kn^2+10nk-3k}{4nk-k^2} - \frac{4n-2k+1}{4n-k}\cdot \frac{2n^2}{k-1} - {}\\ {} - \frac{(4n-2k+1) \cdot n}{4n-k} - \frac{(k-1) \cdot 2n}{4n-k} = {} \\
= \frac{4kn^2 - 9kn - 4n^2 + 3k}{(4n - k)k}.
\end{multline*}
This expression is positive when $3 \leq k < n$ because, for example, it can be written as
\[
\frac{((4n - 1)(n - 2) + 1)(k - 3) + (8n - 3)(n - 3) }{(4n - k)k}. \qedhere
\]
\end{proof}

\begin{corollary}
\label{cor:betterMbound}
We have
\[
M_k \leq 2 n^2 H_k - n^2 - n(k - 14/3)-2k-1.
\]
\end{corollary}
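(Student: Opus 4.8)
The goal is to bound $M_k$, the maximum expected absorption time over \emph{all} decks in tier $k$, using the bound on $\Mr_k$ (the maximum over r-decks only) established in Theorem~\ref{thm:maxMr}. The plan is to relate an arbitrary d-deck in tier $k$ back to the r-deck at the end of its chain via Remark~\ref{lambda+x}, and then invoke the length of the longest chain from Proposition~\ref{prop:longest chain}.

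Concretely, let $D$ be a deck achieving $\lambda_D = M_k$. If $D$ is itself an r-deck, then $M_k = \Mr_k$ and the claimed bound follows immediately (indeed with room to spare, since the claimed right-hand side exceeds $\Mr_k$). Otherwise $D$ is a d-deck sitting at distance $t - i$ from the r-deck $R$ at the end of its chain, where by Remark~\ref{lambda+x} we have $\lambda_D = \lambda_R + (t - i)$. The extra deterministic steps $t - i$ are bounded by the length of the longest chain in tier $k$ minus one; by Proposition~\ref{prop:longest chain} the longest chain has length $2n - 2k$, so $t - i \leq 2n - 2k - 1$. Combining, I would write
\[
M_k \leq \Mr_k + (2n - 2k - 1).
\]
Substituting the bound $\Mr_k \leq 2n^2 H_k - n^2 - n(k - 8/3)$ from Theorem~\ref{thm:maxMr} then gives
\[
M_k \leq 2n^2 H_k - n^2 - n(k - 8/3) + 2n - 2k - 1 = 2n^2 H_k - n^2 - n(k - 14/3) - 2k - 1,
\]
where the $n$-linear terms combine as $-nk + \tfrac{8}{3}n + 2n = -nk + \tfrac{14}{3}n$, exactly matching the claimed expression.

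The only subtle point is justifying that the maximizing deck $D$ (if it is a d-deck) lies on a chain whose length is bounded by the longest chain in tier $k$, and that the r-deck $R$ at the end of that chain is also in tier $k$ so that $\lambda_R \leq \Mr_k$. Both facts follow from the chain structure: a chain is a sequence of deterministic moves, each of which (by the proof of Theorem~\ref{prop:tiers}) preserves the tier, so every deck on $D$'s chain — including its terminal r-deck $R$ — stays in tier $k$. Thus $\lambda_R \leq \Mr_k$ by definition of $\Mr_k$, and the number of deterministic steps from $D$ to $R$ is at most $2n - 2k - 1$ by Proposition~\ref{prop:longest chain}. I expect no real obstacle here; the argument is essentially bookkeeping, and the main thing to get right is the arithmetic combining the linear-in-$n$ terms. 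The result is really just the statement that converting the r-deck bound into an all-decks bound costs at most one full chain traversal.
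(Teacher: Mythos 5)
Your proof is correct and takes essentially the same approach as the paper: the paper's own proof of Corollary~\ref{cor:betterMbound} says only that it is an immediate consequence of Theorem~\ref{thm:maxMr} and Proposition~\ref{prop:longest chain}, which is exactly your bound $M_k \leq \Mr_k + (2n - 2k - 1)$ followed by substitution and the arithmetic $\tfrac{8}{3}n + 2n = \tfrac{14}{3}n$. Your explicit check that deterministic moves preserve the tier (so the terminal r-deck $R$ of the chain lies in tier $k$ and $\lambda_R \leq \Mr_k$) is bookkeeping the paper leaves implicit, and it is sound.
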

\begin{proof}
This is an immediate consequence of Theorem~\ref{thm:maxMr} and Proposition~\ref{prop:longest chain}.
\end{proof}

\section{Remarks and Open Questions}\label{4}

\subsection{Exact values}
\label{sec:exact values}

\begin{table}
\[
\begin{array}{c|cccc}
m_k & n = 2 & n = 3 & n = 4 & n = 5 
\\\hline\\[-.75em]
 k = 1 & 6 & \frac{82}{7} & \frac{1222}{63} & \frac{2878}{99} 
 \\
 & 0110 & 011010 & 01101010 & 0110101010 
 \\\hline\\[-.75em]
 k = 2 && \frac{95}{7} & \frac{4600037}{176148}  & \frac{710077708867121}{18420324934572} 
 \\
       && 001110 & 00111010 & 0011101010 \\\hline\\[-.75em]
 k = 3 &&& \frac{163571425}{5460588} & \frac{554816010312075512538895}{12684262385736134591112
} \\
               &&& 00011110 & 0001111010 \\\hline\\[-.75em]
 k = 4 &&&& \frac{13189822020736497658538011}{279053772486194961004464}\\
                        &&&& 0000111110
\end{array}
\]

\[
\begin{array}{c|cccc}
\Mr_k & n = 2 & n = 3 & n = 4 & n = 5 \\\hline\\[-.75em]
 k = 1 & 6 & \frac{88}{7} & \frac{1334}{63} & \frac{3148}{99} 
 \\
 & 0110 & 010110 & 01010110 & 0101010110 
 \\\hline\\[-.75em]
 k = 2 && \frac{120}{7} & \frac{561691}{19572} & \frac{786215418814907}{18420324934572
} \\
       && 011100 & 01011100 & 0101011100\\\hline\\[-.75em]
 k = 3 &&& \frac{84904643}{2730294} & \frac{295681831813463606167247}{6342131192868067295556}
\\
               &&& 01111000 & 0101111000\\\hline\\[-.75em]
 k = 4 &&&& \frac{27093124726027530844991687}{558107544972389922008928}
\\
                        &&&& 0011111000
\end{array}
\]

\[
\begin{array}{c|cccc}
M_k & n = 2 & n = 3 & n = 4 & n = 5 \\\hline\\[-.75em]
 k = 1 & 7 & \frac{103}{7} & \frac{1537}{63} & \frac{3571}{99} 
 \\
 & 0011 & 010011 & 01010011 & 0101010011 
 \\\hline\\[-.75em]
 k = 2 && \frac{120}{7} & \frac{5128481}{176148} & \frac{802179333539981}{18420324934572
} \\
       && 000111 & 01000111 & 0101000111\\\hline\\[-.75em]
 k = 3 &&& \frac{169032013}{5460588} & \frac{592868797469283916312231}{12684262385736134591112
} \\
               &&& 00001111 & 0100001111\\\hline\\[-.75em]
 k = 4 &&&& \frac{13468875793222692619542475}{279053772486194961004464}
\\
                        &&&& 0000011111
\end{array}
\]
\caption{The values of $m_k$, $\Mr_k$, and $M_k$ for all $1\leq k < n \leq 5$, and the decks that achieve these values.}
\label{table:exact data}
\end{table}

As discussed in Section~\ref{2}, for any fixed $n$ one can use Theorem~\ref{thm:fundamental matrix} to compute the exact values of $\lambda_D$ for all decks $D$.  In light of Corollary~\ref{cor:t1steps}, one might hope for relatively simple exact formulas.  Unfortunately, the data are not accommodating: in Table~\ref{table:exact data}, we give the exact values of $m_k$, $M_k$, and $\Mr_k$ for $1 \leq k < n \leq 5$.  Even basic features of the numbers $\lambda_D$ are imposing for example, it is not hard to show that the least common denominator of $\{\lambda_D : D \text{ in tier } 1\}$ is $4n^2 - 1 = (2n - 1)(2n + 1)$, but the sequence of common denominators for tier $2$ begins 
\begin{align*}
7 & = 7^1, \\
176148 &= 2^2 \cdot 3^3 \cdot 7 \cdot 233, \\
18420324934572 & = 2^2 \cdot 3^3 \cdot 11 \cdot 15505324019, \\
438067323206466940220363196436798 & = 
2 \cdot 11 \cdot 13^2 \cdot 31 \cdot 1195848422971 \cdot 3178290901266961,
\end{align*}
for $n = 3, 4, 5, 6$.  Both the growth rate and lack of simple factorizations are discouraging; unsurprisingly, the situation appears worse in higher tiers.  There are some apparent patterns to the decks that appear in Table~\ref{table:exact data} (particularly those that achieve $m_k$ and $M_k$), but it is not clear how to prove or take advantage of them.

%
%

\subsection{Better bounds}

If exact values are out of the question, it is natural to ask about the sharpness of the bounds in Theorems~\ref{maxmink} and~\ref{thm:maxMr} and Corollary~\ref{cor:betterMbound}.  These bounds are illustrated for the case $n = 7$ in Figure~\ref{case7}.  (This was the largest value for which we were able to naively apply Theorem~\ref{thm:fundamental matrix} to generate exact values.) 
\begin{figure}
  \begin{center}
    \includegraphics[width=5in]{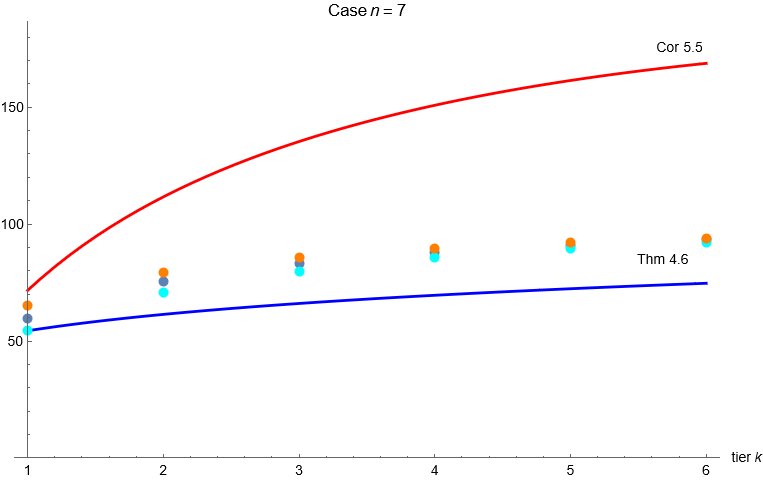}
  \end{center}
  \caption{
    Exact data and bounds for the expected number of steps to absorption when $n=7$, by tier.  The red curve is the upper bound from Corollary~\ref{cor:betterMbound} and the blue curve is the lower bound from Theorem~\ref{maxmink}.  The exact values of $M_k$ and $m_k$ are represented by the orange and cyan dots, respectively; the gray dotes represent the mean of $\lambda_D$ for $D$ in tier $k$.
    }
  \label{case7}
\end{figure}
The figure suggests that there is significant room for improvement, especially in the upper bound.  Further evidence for this thesis may be produced via experimental data for larger $n$.  For example, we used Sage \cite{sagemath} to run $1000$ random trials for every deck in the highest tier $k = n - 1$ with $n = 26$; the results are shown in Table~\ref{table:experiments}.
These data should not be considered overly precise -- in a separate experiment, running ten trials of $1000$ runs each with the deck $0^{26}1^{26}$ produced averages ranging from $1152.186$ to $1217.809$ 
-- but they at least give a sense of the order of magnitude; the largest is $1244.156 \approx 1.84 \times 26^2$.  These experiments (and other similar ones not shown) are compatible with the following conjecture.
\begin{conj}
\label{conj:limit}
We have
\[
\max \{\lambda_D \colon D \text{ any deck}\} \leq C n^2 + o(n^2)
\]
as $n \to \infty$ for some absolute constant $C \leq 2$.
\end{conj}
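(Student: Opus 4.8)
The plan is to close the gap between the lower bound $m_k \ge n^2 + 2nH_k - O(n)$ of Theorem~\ref{maxmink} and the upper bound $M_k \le 2n^2 H_k - n^2 - O(nk)$ of Corollary~\ref{cor:betterMbound}. The discrepancy is exactly the factor $H_k$, which for $k$ of order $n$ is $\Theta(\log n)$; thus the upper bound is superquadratic while the conjecture asserts the truth is quadratic. The source of this loss is structural: in the recursions of Proposition~\ref{prop:max_k} and Theorem~\ref{thm:refined recursion} we charge every sojourn in tier $k$ the \emph{worst-case} chain length $2n - 2k$, even though, by Proposition~\ref{prop:longest chain}, only a single chain attains this length. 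The first step is therefore to replace this worst-case charge by an average-case one.

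The heuristic that makes the conjecture believable, and that the proof should make rigorous, runs as follows. By Proposition~\ref{numberofrdecks}, tier $k$ contains $\binom{n-1}{k}\binom{n}{k}$ decks partitioned into $\binom{n-1}{k}\binom{n-1}{k-1}$ chains, so the \emph{average} chain length in tier $k$ is $\binom{n}{k}/\binom{n-1}{k-1} = n/k$, far smaller than the maximum $2n - 2k$. By Theorem~\ref{thm:where next?}, the expected number of random insertions made at r-decks before descending out of tier $k$ is $2n/k$. If each such insertion that stays in tier $k$ deposits the deck, on average, about halfway along a typical chain, that is, costs roughly $n/(2k)$ deterministic steps before the next r-deck, then the expected time spent in tier $k$ is of order $(2n/k)\cdot(n/(2k)) = n^2/k^2$. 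Summing over tiers gives a total of order $n^2 \sum_{j \ge 1} j^{-2} = \pi^2 n^2 / 6 < 2n^2$, matching the conjectured bound $C \le 2$; a more careful treatment of the landing distribution would refine the constant, which the experiments above suggest may be somewhat larger than $\pi^2/6$ but still below $2$.

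To turn this into a proof, I would set up a refined two-step recurrence for $\Mr_k$ as in Theorem~\ref{thm:refined recursion}, but tracking the \emph{expected} number of deterministic steps $\mathbb{E}[d]$ following a random insertion rather than its worst-case value. The crux is to show that, for a random insertion at an r-deck in tier $k$ conditioned on remaining in tier $k$, the expected distance to the next r-deck is $O(n/k)$. This requires understanding the distribution of landing positions under the insertion map and verifying that insertions do not systematically deposit the deck near the start of a long chain; it is here that the main difficulty lies, since the landed decks are far from uniform over tier $k$, and the worst chains are precisely those one must argue are rarely entered near their beginning.

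An alternative and possibly cleaner route is a Lyapunov/drift argument: construct a potential $\Phi(D) = O(n^2)$, uniformly over all decks $D$, satisfying the one-step drift condition $\mathbb{E}[\Phi(D') \mid D] \le \Phi(D) - 1$; a standard optional-stopping argument then bounds the expected absorption time by $\Phi$. A natural candidate combines a tier term of size about $\sum_{j \le k} n^2/j^2$ with a within-chain term measuring the remaining deterministic steps to the end of the current chain. However, finding a $\Phi$ for which the random-insertion step has the correct negative drift, \emph{without} reintroducing the spurious $H_k$ factor, is itself the central obstacle, and is essentially equivalent to the averaging estimate described above.
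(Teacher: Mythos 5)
You have not proved this statement, and neither does the paper: it appears there as Conjecture~\ref{conj:limit}, an open problem supported only by simulation data (Table~\ref{table:experiments}, Figure~\ref{case7}) and by two ``initial thoughts'' that the authors record --- namely that the expected number of \emph{random} moves from tier $k$ is exactly $2nH_k = o(n^2)$, and that the known upper bounds (Proposition~\ref{prop:max_k}, Theorem~\ref{thm:refined recursion}) lose the factor $H_k$ precisely because they charge the worst-case chain length $2n-2k$ after every random step. Your proposal is an elaboration of exactly this program: the average-chain-length computation $\binom{n}{k}/\binom{n-1}{k-1} = n/k$ from Proposition~\ref{numberofrdecks}, the resulting $\sum_k n^2/k^2 \approx \pi^2 n^2/6$ heuristic, and the reduction to an averaging estimate for the deterministic run following each insertion. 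So you are on the authors' intended route, and you honestly flag the key estimate as unproven. But this means there is a genuine gap, and in fact it is worse than ``unproven'': the crux as you formulate it is false.

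You propose to show that for a random insertion at an r-deck in tier $k$, conditioned on remaining in tier $k$, the expected number of deterministic steps to the next r-deck is $O(n/k)$, implicitly uniformly over r-decks. Consider the r-deck
\[
R \;=\; \underbrace{0101\cdots 01}_{n-k-1 \text{ pairs}}\,0\,\underbrace{11\cdots 1}_{k+1}\,\underbrace{00\cdots 0}_{k},
\]
which lies in tier $k$ but has all of its repeated adjacencies clustered at distance about $2n-2k$ from the top. Since the number of deterministic steps from a d-deck equals the position of its first repeated adjacent pair, an insertion of the leading $0$ into the long alternating prefix (which is where most tier-preserving insertion positions lie when $k \ll n$) creates a new $00$ at the uniformly random insertion depth $p$, and this $00$ \emph{is} the first repeat of the resulting deck; insertions beyond the prefix still leave the first inherited repeat at depth about $2n-2k$. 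Hence the conditional expectation of the deterministic run is $\Theta(n)$, not $O(n/k)$. So the estimate can only hold in an amortized sense --- averaged over the r-decks actually visited during a sojourn in tier $k$, where one must show that decks like $R$ are encountered only $O(1)$ times per sojourn --- or be replaced by a drift condition as in your Lyapunov alternative. Relatedly, the average chain length $n/k$ is not by itself the right benchmark: a random insertion enters a chain in a size-biased way (long chains contain more landing decks), and the entry point within the chain is not uniform. Making the amortized estimate or the potential-function argument rigorous is precisely the open problem, so the conjecture remains unproved by your argument, just as it does in the paper.
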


\begin{table}
\[
\begin{array}{cc}
\text{deck} & \text{average moves $/1000$ trials} \\\hline
0^{26}1^{26} & 1175.637\\
0^{25}1^{26}0 & 1213.973\\
0^{24}1^{26}0^{2} & 1207.898\\
0^{23}1^{26}0^{3} & 1189.077\\
0^{22}1^{26}0^{4} & 1191.580\\
0^{21}1^{26}0^{5} & 1182.054\\
0^{20}1^{26}0^{6} & 1216.356\\
0^{19}1^{26}0^{7} & 1167.585\\
0^{18}1^{26}0^{8} & 1184.297\\
0^{17}1^{26}0^{9} & 1198.983\\
0^{16}1^{26}0^{10} & 1169.055\\
0^{15}1^{26}0^{11} & 1188.905\\
0^{14}1^{26}0^{12} & 1214.038
\end{array}
\qquad
\begin{array}{cc}
\text{deck} & \text{average moves $/1000$ trials} \\\hline
0^{13}1^{26}0^{13} & 1244.156\\
0^{12}1^{26}0^{14} & 1181.733\\
0^{11}1^{26}0^{15} & 1173.931\\
0^{10}1^{26}0^{16} & 1170.929\\
0^{9}1^{26}0^{17} & 1211.762\\
0^{8}1^{26}0^{18} & 1189.327\\
0^{7}1^{26}0^{19} & 1192.200\\
0^{6}1^{26}0^{20} & 1196.258\\
0^{5}1^{26}0^{21} & 1171.347\\
0^{4}1^{26}0^{22} & 1162.706\\
0^{3}1^{26}0^{23} & 1211.565\\
0^{2}1^{26}0^{24} & 1184.151\\
01^{26}0^{25} & 1198.137
\end{array}
\]
\caption{Average number of moves to absorption over $1000$ trials for all decks in the highest tier when $n = 26$.}
\label{table:experiments}
\end{table}

We present some initial thoughts on proving such a conjecture.  The first observation, captured by the next theorem, is that random steps are essentially irrelevant.

\begin{theorem}
Suppose that $D$ is a deck in tier $k$.  The average number of \emph{random} moves before absorption is $2nH_k$, where $H_k$ is the $k$-th harmonic number.
\end{theorem}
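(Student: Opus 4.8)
The plan is to track only the \emph{random} moves and to exploit the fact that these occur exactly at r-decks. Since the deterministic moves within a chain carry a deck inexorably to the r-deck at the end of its chain, the first random move made from any starting deck $D$ in tier $k$ happens at an r-deck in tier $k$; consequently the count of random moves will depend only on the tier $k$, and not on the particular deck $D$. This observation already explains why the quantity in the statement is uniform across all decks in tier $k$, and it reduces the problem to counting visits to r-decks.

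Next I would set up the tier-by-tier bookkeeping. By Theorem~\ref{prop:tiers}, the process descends through the tiers $k, k-1, \ldots, 1, 0$ in order, never skipping a tier and never returning to a higher one; hence with probability $1$ it enters each tier $j$ with $1 \le j \le k$. Let $N_j$ denote the number of random moves the process makes while in tier $j$. Each such random move occurs at an r-deck in tier $j$, and by Theorem~\ref{thm:where next?} it sends the deck to tier $j-1$ with probability $\frac{j}{2n}$ and otherwise (after some deterministic steps) returns the process to another r-deck in tier $j$. Thus, once the process has entered tier $j$, the quantity $N_j$ counts the trials up to and including the first success in a sequence of Bernoulli trials with success probability $\frac{j}{2n}$; that is, $N_j$ is geometrically distributed with $\mathbb{E}[N_j] = \frac{2n}{j}$.

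Finally I would sum using linearity of expectation. No random moves occur in tier $0$, which is absorbing, so the total number of random moves is $\sum_{j=1}^{k} N_j$, and its expectation is
\[
\sum_{j=1}^{k} \mathbb{E}[N_j] = \sum_{j=1}^{k} \frac{2n}{j} = 2n H_k,
\]
exactly as claimed. Note that linearity frees us from having to argue any independence among the $N_j$ for different tiers.

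The one point requiring genuine care is the claim that the success probability in tier $j$ is uniformly $\frac{j}{2n}$ at \emph{every} r-deck of that tier, so that $N_j$ is cleanly geometric; this is precisely the content of Theorem~\ref{thm:where next?} and is the crux on which the entire computation rests. A secondary subtlety, which I would address explicitly, is confirming that after a random move keeping the deck in tier $j$ the process always returns to an r-deck in tier $j$ (rather than terminating or leaving the tier); this follows because every d-deck belongs to a chain ending in an r-deck and because moves never increase the tier. I do not expect either point to be a serious obstacle, so the main work is organizing the argument cleanly rather than overcoming any real difficulty.
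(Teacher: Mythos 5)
Your proposal is correct and follows essentially the same route as the paper's proof: both decompose the random moves by tier, use Theorem~\ref{thm:where next?} to see that the number of random moves in tier $j$ is geometric with mean $\frac{2n}{j}$, and then sum (the paper phrases the summation as an induction, you as linearity of expectation, which is an immaterial difference). Your extra care about why the count is uniform over all r-decks in a tier and why the process always returns to an r-deck is a welcome tightening of details the paper leaves implicit.
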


\begin{proof}
By Proposition~\ref{thm:where next?}, each time we reach an r-deck in tier $k$, the probability we proceed to tier $k- 1$ is $k/2n$.  Therefore, by a standard result (summing a geometric series), the expected number of random steps necessary to reach tier $k - 1$ from tier $k$ is $2n/k$.  Therefore, by induction, the expected number of random steps to reach tier $0$ from tier $k$ is 
\[
\frac{2n}{1} + \frac{2n}{2} + \frac{2n}{3} + \ldots + \frac{2n}{k} = 2nH_k,
\]
as claimed.
\end{proof}

Since $2nH_k = o(n^2)$, for large $n$ all but an infinitesimal fraction of moves taken starting from an arbitrary deck on the way to tier $0$ must be deterministic.  Our current analysis essentially assumes the worst possible scenario for deterministic steps after each random step. One might hope to get better results by considering more carefully the distribution of possible numbers of deterministic steps.

\subsection{Unbalanced decks}

Consider a version of the game studied above with $m$ red cards and $n$ black cards, where $m < n$.  Some aspects of the preceding analysis continue to hold; in particular, Theorem~\ref{prop:tiers} carries over essentially verbatim: the decks can be divided into $m$ tiers, depending how many blocks of each color there are, and a single random move either returns to the same tier or moves one tier lower.  Can one give bounds on the expected number of steps to tier $0$ (the tier in which all $m$ red cards are in separate blocks)?  In this version of the process, tier $0$ has nontrivial structure, which essentially boil down to a recurrent Markov chain on compositions of $n$ into exactly $m$ parts.  What are the dynamics of this chain?

\subsection{Other variations}

We describe here some variations of our process that might also be interesting to study.
\begin{enumerate}
\item One simple variation would be to consider distributions other than uniform for the random insertion.  In particular, the binomial distribution seems natural.  Some distributions will have large effects on the overall behavior of the process; for example, if the top card is always inserted before position $n$, then the deck $0^{n - 1}1^n0$ will become an additional absorbing state.
\item Another variation would be to replace the top-to-random insertion with another kind of shuffle.  For example, one could consider the process in which the top card is placed on the bottom if they differ in color, and otherwise a random riffle shuffle is applied to the deck.  The deck in which colors alternate would still be an absorbing state, but one should expect a completely different analysis (no tier structure etc.) and possibly a much longer absorption time.  Any other shuffle (random-to-top, block-transposition, \ldots) could be used in place of a riffle shuffle.
\item Placing the top card on the bottom is an extreme example of a \textbf{cut}, where the deck is divided into two intervals and the top interval is placed below the bottom interval.  One could define a new random process in which the deterministic move  is replaced by a random cut.  Cutting an alternating deck always produces an alternating deck, so this chain would still be absorbing.  How does its absorption time compare to ours?  This rule could also be combined with different kinds of shuffles (as in the previous variation).
\item One could consider the reversed version of our rule, in which the top card is moved to the bottom when they \emph{agree} in color, and is inserted randomly when they disagree.  This process has a tendency to collect cards of the same color together -- but unlike our process, it produces a recurrent (rather than absorbing) Markov chain.  What can one say about its steady-state distribution?
\item One could also contemplate decks of more than two colors (or suits).  Are there any interesting variations in this setting?
\end{enumerate}


\bibliography{shuffle-bibliography} 
\bibliographystyle{ieeetr}

\end{document}